\documentclass{amsart}
\usepackage{amssymb}
\newtheorem{proposition}{Proposition}
\newtheorem{theorem}{Theorem}
\newtheorem{lemma}[theorem]{Lemma}

\theoremstyle{proof}
\theoremstyle{definition}

\theoremstyle{remark}

\numberwithin{equation}{section}

\newcommand{\abs}[1]{\lvert#1\rvert}

\begin{document}
\title{Squarefree integers and the abc conjecture}
\author{Zenon B. Batang}
\address{Coastal and Marine Resources Core Lab, King Abdullah University of Science and Technology, Thuwal 23955, Saudi Arabia}
\email{zenon.batang@kaust.edu.sa}

\subjclass[2020]{Primary 11A41; Secondary 11D45}
\date{July 31, 2021}
\keywords{Diophantine equation, \textit{abc} conjecture, additive partition, squarefree integer, radical}

\begin{abstract}
For coprime positive integers $a, b, c$, where $a+b=c$, $\gcd(a,b,c)=1$ and $1\leq a < b$, the famous $abc$ conjecture (Masser and Oesterl\`e, 1985) states that for $\varepsilon > 0$, only finitely many $abc$ triples satisfy $c > R(abc)^{1+\varepsilon}$, where $R(n)$ denotes the radical of $n$. We examine the patterns in squarefree factors of binary additive partitions of positive integers to elucidate the claim of the conjecture. With $abc$ hit referring to any $(a, b, c)$ triple satisfying $R(abc)<c$, we show an algorithm to generate hits forming infinite sequences within sets of equivalence classes of positive integers. Integer patterns in such sequences of hits are heuristically consistent with the claim of the conjecture.
\end{abstract}

\maketitle

\section{Introduction}
Denote by $\mathbb{N}=\{1, 2, 3, \dots\}$ the set of natural numbers and $\mathbb{P}=\{2, 3, 5, \dots\}$ the set of prime numbers. An integer $n$ is \textit{squarefree} if it has no repeated prime factors, i.e. if $p | n$, then $p^{2} \nmid n$ for all prime $p$ dividing $n$. If $p^2 | n$, then $n$ is \textit{nonsquarefree} (also \textit{powerful}  or \textit{squareful}). Let $\mathbb{S}$ be the set of positive squarefree integers, including $1$ as the empty (nullary) product, with the first few elements as
\[
	\mathbb{S} = \{1, 2, 3, 5, 6, 7, 10, 11, 13, 14, 15, 17, 19, 21, 22, 23, \dots\}.
\]
Obviously, $\mathbb{P} \subset \mathbb{S} \subset \mathbb{N}$. Let $\omega(n)$ denote the number of distinct prime factors of $n$, where $\omega(1)=0$. Define
\begin{equation} \label{eq: 1.1}
	D_{\omega} = \{s \in \mathbb{S} : \omega = \omega(s)\},
\end{equation}
such that
\[
	D_0 = \{1\}, \quad D_1 = \mathbb{P}, \quad D_2 = \{6, 10, 14, \dots \}, \quad D_3 = \{30, 42, 66, \dots\}, \quad \dots
\]
Then we can also write
\begin{equation} \label{eq: 1.2}
	\mathbb{S} = \{D_{\omega}\}_{\omega = 0}^{\infty} = \{D_0, D_1, D_2, D_3, \dots\}.
\end{equation}
For $n \in \mathbb{N}$, define the finite set
\begin{equation} \label{eq: 1.3}
	\mathcal{S}(n) = \{s \in \mathbb{S} : s \leq n\}.
\end{equation}
For example, $\mathcal{S}(5) = \{1, 2, 3\}$ and $\mathcal{S}(10) = \{1, 2, 3, 5, 6, 7\}$. It is well known \cite{BM} that
\begin{equation} \label{eq: 1.4}
	 \abs{\mathcal{S}(n)} = \dfrac{6n} {\pi^{2}} + \mathcal{O}(n^{1/2}),
\end{equation}
where $\abs{A}$ denotes the \textit{cardinality} of $A$ and $\mathcal{O}$ is the usual \textit{big oh} notation. Put
\[
	\mathbb{N}_{>2} = \mathbb{N} \setminus \{1, 2\}, \qquad \mathbb{P}_{> 2} = \mathbb{P} \setminus \{2\}, \qquad \mathbb{S}_{>1} = \mathbb{S} \setminus \{1\}.
\]
Let $a, b, c$ be pairwise coprime positive integers such that
\begin{equation} \label{eq: 1.5}
	a + b = c,\quad \gcd(a,b,c)=1, \quad 1\leq a < b < c.
\end{equation}  
Obviously, all admissible values of $c$ are in $\mathbb{N}_{>2}$. Any $(a,b,c)$ tuple satisfying $\eqref{eq: 1.5}$ is herein termed \textit{abc triple}, where
\[
	a \equiv b \equiv 1 \pmod{2} \quad \text{if } \; c \; \text{ is even}, \qquad a \not \equiv b \pmod{2} \quad \text{if } \; c \; \text{ is odd}.
\] 
For $n >1$, define the \textit{radical}, also called \textit{squarefree kernel}, as
\begin{equation} \label{eq: 1.6}
	R(n) = \prod_{p \mid n} p,
\end{equation} 
where $R(1)=1$. Clearly, the radical is the highest squarefree factor of an integer. In 1985, Masser and Oesterl\`e \cite{JO, DWM} proposed the now famous \textit{abc conjecture}, which asserts that for $\varepsilon > 0$ only finitely many $abc$ triples satisfy
\begin{equation} \label{eq: 1.7}
	c > R(abc)^{1+\varepsilon}.
\end{equation} 
This simply stated but very deep Diophantine problem has far-reaching implications in number theory \cite{MW, MM}.

Here, we call any $abc$ triple satisfying $R(abc)<c$ an \textit{abc hit} \cite{MW}, e.g. $(1,2^3,3^2)$, $(5,3^3,2^5)$, $(2, 3^{10} \cdot 109, 23^{5})$ and $(11^2, 3^2 \cdot 5^6 \cdot 7^3, 2^{21} \cdot 23)$ are $abc$ hits. Most authors use \textit{abc triple} to mean \textit{abc hit}, but the distinction between terms should be clear in the present context. Write
\begin{equation} \label{eq: 1.8}
	c = Q(c) R(c), \quad Q(c) \in \mathbb{N},
\end{equation} 
where $Q(c)$ indicates how far $c$ is from being a squarefree or, equivalently, how rich $c$ in squareful factors \cite{NE}. With \eqref{eq: 1.8}, every $abc$ hits implies that
\begin{equation} \label{eq: 1.9}
	R(ab) < Q(c), \qquad R(abc) < c,
\end{equation}
where $R(ab) = R(a)R(b)$ by the multiplicative property of the radical. With \eqref{eq: 1.9}, one can quickly infer if a given $c$ is a \textit{no-hit number}, by which we mean that $c$ cannot be partitioned into $a$ and $b$ satisfying \eqref{eq: 1.5} and $R(abc) < c$. Obviously, every $c \in \mathbb{S}$ is a no-hit number since $Q(c)=1$ and, hence, $R(abc)>c$. As shown below, there also exist infinitely many nonsquarefree integers ($c \not \in \mathbb{S}$) that are no-hit numbers.

An $abc$ triple is termed \textit{primitive} if $(a,b,c)$ are pairwise coprime. For such triples where $c>2$, the conditions in \eqref{eq: 1.5} imply the trivial fact that
\begin{equation} \label{eq: 1.10}
	1 \leq a < \dfrac{c}{2} < b \leq c-1.
\end{equation}
Let $N(c)$ be the number of primitive triples for $c>2$. It is easy to see that
\begin{equation} \label{eq: 1.11}
	N(c) = \dfrac{\varphi(c)}{2}, \qquad \varphi(c) = c \prod_{p \mid c} \bigg(1-\dfrac{1}{p}\bigg),
\end{equation}
where $\varphi(c)$ is the \textit{Euler totient function}. Denote by $\mathcal{N}(c)$ the cumulative number of primitive triples up to $c$, with a known asymptotic formula \cite{FM} as
\begin{equation} \label{eq: 1.12}
	\mathcal{N}(c) = \sum_{k \leq c} N(k) = \dfrac{3c^2}{2 \pi^2} + \mathcal{O}(c \log c).
\end{equation}
There have been refinements of the error term in \eqref{eq: 1.12}, of which Walfisz \cite{AW} obtained the best unconditional estimate with $\mathcal{O}\big(c(\log c)^{2/3}(\log\log c)^{4/3}\big)$. It follows that
\begin{equation} \label{eq: 1.13}
	N(p) = \dfrac{p-1}{2}, \quad \forall \, p  \in \mathbb{P}_{>2}; \qquad N(p^{n}) = \dfrac{p^{n-1}(p-1)}{2}, \quad \forall \, n > 1.
\end{equation}
Both \eqref{eq: 1.12} and \eqref{eq: 1.13} imply the infinitude of primitive triples as $c$ tends to infinity. The $abc$ conjecture asserts that, among such triples, there are only rare occurrences of hits above the bound in \eqref{eq: 1.7} for any $\varepsilon > 0$. This suggests that, in most cases, a positive integer and its coprime binary partitions cannot be all composed of powers of small primes, i.e. when $c$ is composed of powers of small primes, then it is more likely to have coprime binary partitions $a$ and $b$ that are raised to powers of large primes or vice versa. The $abc$ conjecture states that there can only be finitely many exceptions to such a general pattern among the natural numbers, regardless of the magnitude of $\varepsilon > 0$.
 
Let $H(c)$ denote the number of hits for a given $c$. We say that $c$ is an \textit{exception} if $H(c)>0$. Note that the same $c$ may have multiple hits, of which the smallest are $81$ $\{(1, 2^4 \cdot 5, 3^4), (2^5, 7^2, 3^4)\}$ and $256$ $\{(13, 3^5, 2^8), (3^4, 5^2 \cdot 7, 2^8)\}$. Other examples are $3^8$ with $8$ hits and $5^6$ with $15$ hits. Where $H(c)>1$, we call $c$ a \textit{super exception}.

It follows from \eqref{eq: 1.11} and \eqref{eq: 1.13} that $N(p) > N(p+1)$ for $p>3$ and, as noted above, we have that
\begin{equation} \label{eq: 1.14}
		H(c) = 0, \quad \forall \, c \in \mathbb{S}.
\end{equation}
As shown below, there also exist infinitely many nonsquarefree $c$ where $H(c)=0$, although $\mathcal{N}(c)=\infty$ as $c$ goes to infinity. Let $\mathcal{H}(c)$ be the partial sum of the number of hits up to $c$, such that
\begin{equation} \label{eq: 1.15}
 	\mathcal{H}(c) = \sum_{k \leq c} H(k).
\end{equation}
Note that $\mathcal{H}(c)$ accounts for the multiplicity of hits due to the occurrences of super exceptions. Naturally, \eqref{eq: 1.15} diverges as $c$ goes to infinity, although we have yet to see an explicit formulation of $\mathcal{H}(c)$. With the infinitude of $\mathbb{S}$, we thus expect that
\begin{equation} \label{eq: 1.16}
	\lim_{c \to \infty} \dfrac{\mathcal{H}(c)}{c} = 0.
\end{equation}

It is relevant to state the elementary fact that
\begin{equation} \label{eq: 1.17}
	1 \leq R(a) < R(ab) < Q(c) \quad \Longrightarrow \quad 1 < R(b) < \dfrac{Q(c)}{R(a)},
\end{equation}
where $R(b)$ is at least $2$ or $3$ if $c$ is odd or even, respectively, such that $R(abc)$ is at least $6$ and $(1, 2^3, 3^2)$ is the least hit for all $c \in \mathbb{N}_{>2}$. Let $n = Q(c)$ in \eqref{eq: 1.3} to define
\begin{equation} \label{eq: 1.18}
	\mathcal{S}'(Q(c)) = \{s \in \mathbb{S}_{>1} : s < Q(c)\}.
\end{equation}
Denote the \textit{primorial} of the largest prime below $Q(c)$ by
\[
	P(Q(c)) = \prod_{p < Q(c)} p,
\]
i.e. excluding $Q(c)$ when it is prime, and let
\begin{equation} \label{eq: 1.19}
	G(c) = \gcd \big(P(Q(c)), R(c)\big).
\end{equation}
Then we can construct the set
\begin{equation} \label{eq: 1.20}
	\mathcal{G}(c) = \{s \in \mathbb{S}_{>1} : s | G(c), s < Q(c)\}.
\end{equation}
We also define
\begin{equation} \label{eq: 1.21}
	D_{max} = \max \{s \in \mathbb{S} : s < Q(c)\}.
\end{equation}
For brevity, we can change notation to
\begin{equation} \label{eq: 1.22}
	R(a) = d_1, \quad R(b) = d_2, \quad R(c) = d_3, \quad R(ab) = D, \quad R(abc) = R,
\end{equation}
where $R$ is always even since $d_1 d_2 \not \equiv d_3$ (mod 2) for all $c \in \mathbb{N}_{>2}$. Therefore, every hit must satisfy
\begin{equation} \label{eq: 1.23}
	d_1 d_2 = D \leq D_{max} < Q(c), \qquad d_1 d_2 d_3 = R < c.
\end{equation}
The term \textit{abc equation} shall refer to $a+b=c$, where any solution must satisfy the conditions that $d_1 d_2 d_3 < c$ and $\gcd(d_1, d_2, d_3) = 1$. Obviously, one finds that
\begin{equation} \label{eq: 1.24}
	\omega(d_1) \geq 0, \qquad \omega(d_2) \geq 1, \qquad \omega(D) \geq 1,
\end{equation}
where $D$ is squarefree as $d_1$ and $d_2$ are coprime and both squarefree. It follows that
\begin{equation} \label{eq: 1.25}
	\omega(D) = \omega(d_2) \quad \text{if } \; a = 1, \qquad \omega(D) = \omega(d_1) + \omega (d_2) \quad \text{if } \; a > 1.
\end{equation}
Further let $\mathcal{D}(c)$ be the set of admissible values of $D$ for any $c>2$ where $Q(c)>1$. By \textit{admissibility}, we mean that $1 < D \leq D_{max}$. Hence, $\mathcal{D}(c)$ is the \textit{search domain} for solutions to the $abc$ equation for a given $c$, i.e. $\mathcal{D}(c)$ constitutes the $(d_1, d_2)$ \textit{data} for the construction of $abc$ equations. It is easy to see that
\begin{equation} \label{eq: 1.26}
	\mathcal{D}(c) = \mathcal{S}'(Q(C)) \setminus \mathcal{G}(c),
\end{equation}
where $D \in \mathcal{D}(c)$ may not yield integral solutions to the $abc$ equation for $c$ or a class of $c$ with the same $Q(c)>1$. In fact, $\mathcal{D}(c)$ is not defined for all $c \in \mathbb{S}$, as claimed in \eqref{eq: 1.14}. We shall again invoke \eqref{eq: 1.26} in our argument below.

\section{(Not) as simple as abc}
The radical function is at the heart of the $abc$ conjecture. Let us call $D=R(ab)$ a \textit{product kernel} and $d_3=R(a+b)$ a \textit{sum kernel}, with both kernels being squarefree. As the radical is multiplicative, $R(abc)$ is a mix of both kernels since $R(abc) = Dd_3$, where $a+b=c$. Hence, the $abc$ conjecture encodes the additive and multiplicative structures in the ring of integers.

It is a fundamental fact that every positive integer can be uniquely represented as a product of prime powers, such that any two coprime positive integers will have distinct radicals and their sum will always yield a different radical, i.e. $d_1 \neq d_2 \neq d_3$. By comparing the product $d_1d_2d_3$ with the sum $c$, the $abc$ conjecture claims that only finitely many $abc$ triples satisfy $c>(d_1d_2d_3)^{1+\varepsilon}$ for any $\varepsilon > 0$. This motivates a deep inquiry into how the natural numbers are formed by the simple actions of addition and multiplication.

Let $f : \mathbb{S} \to \mathbb{N}$ maps $R(n)$ to $n$. No explicit formulation exists for such an \textit{inverse} operation. Apparently, $f(R(abc))$ is a one-to-many relation since there exist distinct triples $(a_i, b_i, c_i)$ and $(a_j, b_j, c_j)$ where $R(a_i b_i c_i) = R(a_j b_j c_j)$ for $c_i \neq c_j$, in which case $f(R(c_i)) = c_i$ and $f(R(c_j)) = c_j$. Suppose $A(c)$ and $B(c)$ denote the sets of $a$ and $b$ satisfying \eqref{eq: 1.5}, respectively, for $c>2$. With the bijection between $A(c)$ and $B(c)$, as implied by $\varphi(c)/2 = \varphi(c/2)$, we have that
\begin{equation} \label{eq: 2.1}
	\lvert A(c) \rvert = \lvert B(c) \rvert = N(c) > 0, \quad \forall \, c \in \mathbb{N}_{>2}.
\end{equation}
That is, among $a \in A(c)$ and $b \in B(c)$ that are coprime binary partitions of $c$, our aim is to find pairs $(a, b)$ satisfying \eqref{eq: 1.7}. Recall that $N(c)$ counts the pairs $(a, b)$ satisfying \eqref{eq: 1.5}, among which $H(c)$ is the number of pairs for which $d_1 d_2 d_3 < c$, with $N(c) > 0$ and $H(c) \geq 0$ for all $c>2$. It is implicit from \eqref{eq: 1.11} that $N(c)$ tends to be lower when $c$ consists of small prime factors and higher when $c$ is a large prime or a product of large prime powers. Hence, $N(c)$ tends to vary inversely with $\omega(c)$ and generally fluctuates around an increasing average trend as $c$ goes to infinity. A key question thus remains on how $H(c)$ behaves with increasing $c$.

For primitive triples, we have noted above that $a$ and $b$ are both odd if $c$ is even and of opposite parities if $c$ is odd. In order to yield hits, it follows that
\begin{equation} \label{eq: 2.2}
	3 \leq D < Q(c) \quad \text{if } \; c \; \text{ is even}, \qquad 2 \leq D < Q(c) \quad \text{if } \; c \text{ is odd}.
\end{equation}
Observe that $Q(c)$ expresses the relative magnitude of $c$ as an integer multiple of $d_3$. If there exists an $f$ such that $f(d) = n$ for $R(n)=d$, then the $abc$ conjecture claims that for any $c \in \mathbb{N}_{>2}$, it is rare to find integral solutions to
\begin{equation} \label{eq: 2.3}
	F(d_1, d_2) = f(d_1) + f(d_2) = c,
\end{equation}
with $d_1$ and $d_2$ satisfying \eqref{eq: 1.23}. The conjecture suggests that such solutions, if they exist, will mostly yield values of $R(abc)$ that are not much smaller than $c$. Currently, however, there is no effective machinery to deal with the radical inversion to solve \eqref{eq: 2.3}, but there are several techniques for finding $abc$ hits based on number theory or related fields (e.g. see review in \cite{MM}). Below, we outline an approach to solving equation \eqref{eq: 2.3} for specific integer classes.

Let $e_{p}(n)$ denote the \textit{p-adic order} of $n \in \mathbb{N}$, i.e. $e_{p}(n)$ is the highest exponent for which $p^{e} | n$. Then every $abc$ triple takes the form
\begin{equation} \label{eq: 2.4}
	a = \prod_{i \in \mathbb{N}} p_{i}^{e_{i}}, \qquad b = \prod_{j \in \mathbb{N}} p_{j}^{e_{j}}, \qquad c = \prod_{k \in \mathbb{N}} p_{k}^{e_{k}},
\end{equation}
where $e_{p}(n) = 0$ for $p \nmid n$. For an $abc$ hit, we have that
\begin{equation} \label{eq: 2.5}
	d_1 = \prod_{p_i | a} p_{i}, \qquad d_2 = \prod_{p_j | b} p_{j}, \qquad d_3 = \prod_{p_k | c} p_{k},
\end{equation}
where all $p_i, p_j, p_k$ are pairwise coprime. Hence, the difficulty of the $abc$ conjecture stems from the complexity of inverting \eqref{eq: 2.5} to yield \eqref{eq: 2.4} as integral solution to $abc$ equation. In addition to \eqref{eq: 2.3}, one may also find hits by solving the $abc$ equation in the form
\begin{equation} \label{eq: 2.6}
	E(d_3, d_2) = c - f(d_2) = a \quad \textnormal{or} \quad E(d_3, d_1) = c-  f(d_1) = b
\end{equation}
for any given $c \in \mathbb{N}_{>2}$. For obvious reason, it suffices to deal with integral solutions to $F(d_1, d_2) = c$ and $E(d_3, d_1) = b$ to find $abc$ hits.

\section{Finding abc hits}
The sum kernel $d_3 \in \mathbb{S}_{>1}$ is the largest squarefree divisor of $c \in \mathbb{N}_{>2}$. Let
\begin{equation} \label{eq: 3.1}
	R(Q(c)) = q,
\end{equation}
where $q \in \mathbb{N}$ and $Q(c)=q$ for all $Q(c) \in \mathbb{S}$. Also note that $q$ is the highest squarefree factor of $Q(c)$, such that $c$ is squarefree when $q=1$ and nonsquarefree when $q >1$. We define the \textit{equivalence class}
\begin{equation} \label{eq: 3.2}
	\overline{Q} = \{c \in \mathbb{N}_{>2} : Q(c) \textnormal{ is constant}\},
\end{equation}
where any distinct pairs $(c, d_3)$ and $(c', d'_3)$ exhibit an equivalence relation, denoted by $(c, d_3) \sim (c', d'_3)$, which satisfies $cd'_3 =c'd_3$. In such a case, $Q(c) = Q(c')$ such that $d_3$ and $d'_3$ are uniquely defined for $c$ and $c'$, respectively. Hence, this qualifies $\overline{Q}$ as an equivalence class of infinitely many $c \in \mathbb{N}_{>2}$, where $c = Q(c)d_{3}$ for a fixed $Q(c)$ and $d_3 \in \mathbb{S}_{>1}$. This means that all $c \in \overline{Q}$ have a constant $Q(c)$ and a varying $d_3$, e.g. all elements in $\{9, 18, 45, 63, 90, 99, \dots\}$, denoted by $\overline{Q}=\overline{3}$, are equivalent. Below, we show that the elements in $\overline{Q}$ may not share the same \textit{solvability} property in the context of the $abc$ conjecture, i.e. there may not exist an integral solution to $abc$ equation for some $c \in \overline{Q}$. In fact, some $\overline{Q}$ consist entirely of infinitely many no-hit numbers.

To find hits for $c \in \overline{Q}$, define $\mathcal{D}(c)$ from $Q(c)$ and then check for solutions to, say, $F(d_1, d_2) = c$, with $d_1$ and $d_2$ depending on $D \in \mathcal{D}(c)$. For the initial $c$, write
\begin{equation} \label{eq: 3.3}
	c = zK, \qquad \textnormal{where} \quad z = qQ(c), \quad d_3=qK, \quad \gcd(z, K) = 1.
\end{equation}
With \eqref{eq: 2.3}-\eqref{eq: 2.5}, the $abc$ equation takes the form
\begin{equation} \label{eq: 3.4}
	F(d_1, d_2) = f\big(\prod_{p_i | d_1} p_{i}\big) + f\big(\prod_{p_j | d_2} p_{j}\big) =  \prod_{p_i | d_1} p_{i}^{e_{p_i}} + \prod_{p_j | d_2} p_{j}^{e_{p_j}} = zK,
\end{equation}
which implies that
\begin{equation} \label{eq: 3.5}
	F(d_1, d_2) = \prod_{p_i | d_1} p_{i}^{e_{p_i}} + \prod_{p_j | d_2} p_{j}^{e_{p_j}} \equiv 0\pmod{z}.
\end{equation}
Notice that \eqref{eq: 3.5} may yield integral solutions for infinitely many $c$, not necessarily in the same equivalence class, as shown below. For finding solutions to $abc$ equations, consider the \textit{modular exponentiation}
\begin{equation} \label{eq: 3.6}
	p^{e} = r_{e} \pmod{m}, \quad e = 1, 2, 3, \dots, \quad \gcd(p, m)=1,
\end{equation}
such that for a fixed \textit{power vector} $e(n) = (e_i)_{i=1}^{\omega(n)}=(e_1, e_2, \dots, e_{\omega(n)})$, where $1 \leq i \leq \omega(n)$ indicates all $p_i | n$, we can write for any $n$
\begin{equation} \label{eq: 3.7}
	\prod_{i=1}^{\omega(n)} p_i^{e_i} \equiv \prod_{i=1}^{\omega(n)} r_{e_i} \pmod{m}.
\end{equation}
Hence, we can recast \eqref{eq: 3.5} into the form
\begin{equation} \label{eq: 3.8}
	F(d_1, d_2) \equiv \prod_{p_i | d_1} r_{e_i}  + \prod_{p_j | d_2} r_{e_j}  \equiv 0\pmod{z},
\end{equation}
which simplifies to
\[
	r_{e(a)} + r_{e(b)} \equiv 0\pmod{z}, 
\]
where
\[
	r_{e(a)} \equiv \prod_{p_i | d_1} r_{e_i} \pmod{z}, \qquad r_{e(b)} \equiv \prod_{p_j | d_2} r_{e_j} \pmod{z}.
\]
Let the solution set, if it exists, be denoted by
\begin{equation} \label{eq: 3.9}
	U_{z}(d_1; d_2) = \big\{\big(u_{z}(d_1); u_{z}(d_2)\big)\big\} = \big\{\big(e(a); e(b)\big)\big\},
\end{equation}
where
\begin{equation} \label{eq: 3.10}
	u_{z}(d_1) = e(a) = \big(e_i\big)_{i=1}^{\omega(d_1)}, \quad u_{z}(d_2) = e(b) =  \big(e_j\big)_{j=1}^{\omega(d_2)}.
\end{equation}
As specified, $e(a)$ and $e(b)$ also denote $u_{z}(d_1)$ and $u_{z}(d_2)$, respectively, for notational convenience. This suggests an increased complexity of the solution set with higher permutations of primes in $D$ and higher powers of primes in $(d_1, d_2)$ or, alternatively, with higher $Q(c)$ such that $G(c) \ll P(Q(c))$. Since \eqref{eq: 3.6} generates a \textit{cyclic} sequence, such a modular exponentiation presupposes that \eqref{eq: 3.8} may produce infinitely many solutions, which are hits for an infinitude of $c$ in different $\overline{Q}_i$.

As outlined above, solving the $abc$ equation can take a sequence of key steps as
\begin{equation} \label{eq: 3.11}
	c \in \mathbb{N}_{>2} \; \longrightarrow \; Q(c) \; \longrightarrow \; D \in \mathcal{D}(c) \; \longrightarrow \; F(d_1, d_2)=c, E(d_3, d_1) = b.
\end{equation}
That is, from an initial $c>2$ one obtains $Q(c)$ by \eqref{eq: 1.8} to identify all admissible product kernels $D \in \mathcal{D}(c)$ in accordance to \eqref{eq: 1.26}. Then for $(d_1, d_2)$ from any choice of $D$, one checks for integral solutions to $F(d_1, d_2) = c$ or $E(d_3, d_1)=b$ by modulo arithmetic. If a solution exists, then one naturally finds other solutions for infinitely many $c'>c$, where the distribution of $c'$ is further clarified below. 

Define
\begin{equation} \label{eq: 3.12}
	F_q = \{\overline{Q}_{i} : R(Q(c)) = v_{i}(q), \; \forall \; i \in \mathbb{N}, \; q \in \mathbb{S}_{>1}\},
\end{equation}
as the set of equivalent classes of $c \in \mathbb{N}_{>2}$ where $Q(c)$ is a function of $q$, denoted by $v_{i}(q)$, for all $\overline{Q}_i$. Specifically, $\overline{Q}_i$ is the equivalence class for $Q(c) = v_{i}(q)$, where $v_i (q) = q^i$ for $i \in \mathbb{N}$ if $q \in \mathbb{P}$. Write $\overline{Q}_i = \overline{q^i}$, such that
\[
	F_q = \{\overline{Q}_i\}_{i=1}^{\infty} = \{\overline{q^i}\}_{i=1}^{\infty} = \{\overline{q}, \overline{q^2}, \overline{q^3}, \dots\}, \quad \forall \; i \in \mathbb{N}.
\]
If $q \in \mathbb{S}_{>1}$ but $q \not \in \mathbb{P}$, write $\overline{Q}_i = \overline{v_i}$ where $\overline{v_i}$ corresponds to
\[
	v_{i} (q) = \prod_{j=1}^{n} p_{j}^{e_j}, \quad e_j \in \mathbb{N}, \quad \omega(q) = n > 1, \quad \forall \; i \in \mathbb{N},
\]
such that
\[
	F_q = \{\overline{Q}_i\}_{i=1}^{\infty} = \{\overline{v_i}\}_{i=1}^{\infty}, \quad v_{i}(q) < v_{i+1} (q),  \quad \forall \; i \in \mathbb{N}.
\]
For example, we have $F_3 = \big\{\overline{3}, \overline{9}, \overline{27}, \overline{81}, \overline{243}, \dots \big\}$ and $F_6 = \big\{\overline{6}, \overline{12}, \overline{18}, \overline{48}, \overline{54}, \dots \big\}$. 

For the least positive integer $t \in \mathbb{N}$ where
\[
	p^{t} \equiv 1 \pmod{m}, \quad t=1,2,3, \dots,
\]
with $1<p<m$ and $\gcd(p, m)=1$, then we call $t$ the \textit{period} of $p^e$ for which $r_e=1 \pmod{m}$ in \eqref{eq: 3.6}. If $k$ is the \textit{number of cycles} of $p^e$ with period $t$, then $r_{kt}=1 \pmod{m}$ fo all $k \in \mathbb{N}$. Let $T_{m}(p)$ be the cyclic sequence
\begin{equation} \label{eq: 3.13}
	T_{m}(p) = \{r_1, r_2, r_3, \dots\} \equiv \{p^e\}_{e=1}^{\infty} \pmod{m},
\end{equation}
such that for $e=\mu$ at $k=1$ for period $t$, we have that $r_{\mu} = r_{\mu+t(k-1)}  \pmod{m}$ for any $\mu, k, t \in \mathbb{N}$.

If $(a, b, c)$ is a solution to $F(d_1, d_2) = c$, where $c$ is \textit{minimal} in $F_{q}$, then we say that $(a, b, c)$ is a \textit{base hit} in $F_q$, e.g. $(1, 2^3, 3^2)$ is a base hit in $F_3$. From a base hit with $c \in F_q$, infinitely many other hits can be generated for $c' > c$, with such $c'$ not necessarily in the same $F_q$ since solving the $abc$ equation for $c \in F_q$ by modular arithmetic may yield solutions with $c' \not \in F_q$. This is an inherent outcome, as any modulo $m \in \mathbb{N}_{>1}$ may have a quotient with a squareful factor that is coprime to $m$, thus resulting in $q'$ for $c'$ where $q' \neq q$. For example, such cases arise in solutions to $F(d_1, d_2) \equiv 0 \pmod{z}$ for $c \in F_q$, with corresponding quotients, denoted by $K$ below, that instead yield $c'  \in F_{q'}$, where $q' \neq q$ and $\gcd(q', q)>1$ for $q, q' \in \mathbb{S}_{>1}$. Suppose that $q' > q$ and $\gcd(q', q)=q$, then there exists $s \in \mathbb{S}_{>1}$ such that $q'=sq$. In such a case, a relation between $F_{q'}$ and $F_q$ is specified below to characterize the distribution of $abc$ hits arising from a common base hit.

The above setup allows for inferences relevant to the claim of the $abc$ conjecture. We present some fundamental propositions pertaining to the distribution of $abc$ hits in $\mathbb{N}_{>2}$. Note that $H(c) > 0$ if $c$ has one or more hits and $H(c) = 0$ if $c$ is a no-hit number. We claim that

\begin{proposition} \label{prop1}
	$H(c)=0$ if $P(Q(c)) = G(c)$.
\end{proposition}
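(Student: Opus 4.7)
The plan is to unpack the hypothesis, show it forces the search domain $\mathcal{D}(c)$ to be empty, and then apply the admissibility criterion \eqref{eq: 1.23}--\eqref{eq: 1.26}, which says that every hit must arise from some $D\in\mathcal{D}(c)$.

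First I would translate the hypothesis $P(Q(c))=G(c)$ into a divisibility statement. By \eqref{eq: 1.19}, $G(c)\mid R(c)$ always. The equality $G(c)=P(Q(c))$ therefore says that every prime $p<Q(c)$ divides $R(c)$, equivalently divides $c$, and hence divides $d_3$. This is the only real content of the hypothesis, but it is exactly what is needed.

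Next I would show $\mathcal{D}(c)=\emptyset$. Pick any $s\in\mathcal{S}'(Q(c))$, i.e.\ $s\in\mathbb{S}_{>1}$ with $s<Q(c)$. Every prime factor $p$ of such $s$ satisfies $p\leq s<Q(c)$, so $p\mid P(Q(c))$. Since $s$ is squarefree, this gives $s\mid P(Q(c))=G(c)$, and together with $s<Q(c)$ this places $s$ in $\mathcal{G}(c)$ by \eqref{eq: 1.20}. Thus $\mathcal{S}'(Q(c))\subseteq\mathcal{G}(c)$, and by \eqref{eq: 1.26} we get $\mathcal{D}(c)=\mathcal{S}'(Q(c))\setminus\mathcal{G}(c)=\emptyset$.

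Finally I would conclude by the admissibility condition \eqref{eq: 1.23}: any hit $(a,b,c)$ forces the product kernel $D=d_1d_2$ to be a squarefree integer with $1<D\le D_{max}<Q(c)$ that is moreover coprime to $d_3$. The previous paragraph shows no such $D$ exists; alternatively, one can argue directly that any candidate $D>1$ squarefree with $D<Q(c)$ must have all its prime factors below $Q(c)$, each of which divides $d_3$ by the hypothesis, contradicting $\gcd(D,d_3)=1$. Either way, no admissible product kernel is available, so there can be no solution to the abc equation with $R(abc)<c$, giving $H(c)=0$.

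The proof is essentially a set-chasing argument, and I do not expect any real obstacle: the work has already been done in setting up $\mathcal{G}(c)$ and $\mathcal{D}(c)$ so as to encode exactly the coprimality obstruction between $D$ and $d_3$. The only point worth care is the observation that squarefree $s<Q(c)$ automatically has all prime factors below $Q(c)$, which is what makes the inclusion $\mathcal{S}'(Q(c))\subseteq\mathcal{G}(c)$ work under the hypothesis.
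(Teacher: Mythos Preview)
Your proposal is correct and follows essentially the same route as the paper: show that the hypothesis forces $\mathcal{S}'(Q(c))=\mathcal{G}(c)$ (you prove the nontrivial inclusion $\mathcal{S}'(Q(c))\subseteq\mathcal{G}(c)$, the reverse being automatic from \eqref{eq: 1.20}), deduce $\mathcal{D}(c)=\emptyset$ via \eqref{eq: 1.26}, and conclude $H(c)=0$. The paper's proof states exactly this chain in a single sentence; your version merely supplies the details behind the set inclusion.
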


\begin{proof} [Proof]
	If  $P(Q(c)) = G(c)$, then $\mathcal{S}'(Q(c)) = \mathcal{G}(c)$ such that $\mathcal{D}(c) = \emptyset$ in \eqref{eq: 1.26} and, hence, $H(c)=0$ as claimed.
\end{proof}

The claim of the proposition is obvious, as no hit can occur since an $abc$ equation cannot be formed for $c$ with the indicated condition.

\begin{proposition} \label{prop2}
		$H(c) =  0$ for all $c \in \big\{\overline{Q}_{i}\big\} = \big\{\overline{1}, \overline{2}, \overline{4}, \overline{6}, \overline{8}, \overline{10}, \overline{12}\big\}$.
\end{proposition}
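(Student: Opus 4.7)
The plan is to dispatch each of the seven classes separately by reducing the $abc$ equation to a short congruence that can be decided from the cycle of $p^{e}$ modulo a small prime power. For $\overline{1}$ and $\overline{2}$ the set $\mathcal{S}'(Q(c))$ is empty, so $\mathcal{D}(c) = \emptyset$ by \eqref{eq: 1.26} and Proposition~\ref{prop1} applies directly. The remaining five classes $\overline{4}, \overline{6}, \overline{8}, \overline{10}, \overline{12}$ are then handled uniformly.

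First I invert $Q(c) = \prod_{p \mid c} p^{e_{p}(c)-1}$ in each class to obtain the canonical form $c = zK$ with $K \in \mathbb{S}$ coprime to $z$, where $z$ equals $8, 36, 16, 100, 72$ for the five respective classes. In each case $c$ is even, forcing $a$ and $b$ both odd and hence $D$ odd. Since the smallest squarefree product of two distinct odd primes is $3\cdot 5 = 15 > 12 \geq Q(c)$, every admissible $D < Q(c)$ must be a single prime $p$ not dividing $R(c)$. Tracing through \eqref{eq: 1.19}--\eqref{eq: 1.26}, in the most permissive subcase (where $K$ is coprime to $P(Q(c))$) the candidate primes reduce to $\{3\}$, $\{5\}$, $\{3, 5, 7\}$, $\{3, 7\}$, and $\{5, 7, 11\}$ respectively; any additional common prime factor of $K$ and $P(Q(c))$ only shrinks these lists and therefore delivers $\mathcal{D}(c) = \emptyset$ via Proposition~\ref{prop1}.

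For each candidate $p$ on the base list, the factorization $d_1 d_2 = p$ forces $a = 1$ and $b = p^{e}$, so the $abc$ equation becomes $1 + p^{e} \equiv 0 \pmod{z}$. Splitting $z$ by the Chinese remainder theorem into its prime-power components (the relevant moduli are $8$, $16$, $9$, $25$) and applying \eqref{eq: 3.6}--\eqref{eq: 3.13}, I verify case by case that the cycle of $1 + p^{e}$ avoids $0$ modulo $z$. The typical obstruction is a parity conflict between the residue class of $e$ forced by the $2$-part of $z$ and the one forced by the odd-prime part; for example, in $\overline{10}$ with $p = 3$ the condition modulo $4$ forces $e$ odd while the condition modulo $25$ forces $e \equiv 10 \pmod{20}$, which is even. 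In the simpler classes $\overline{4}, \overline{8}, \overline{12}$ the obstruction is visible already after reducing modulo $z$ alone, since the orders of $3, 5, 7, 11$ modulo $8, 16, 9$ are all at most $6$ and the resulting short cycles can be tabulated directly.

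The main obstacle is bookkeeping rather than insight: I must check every admissible $(\overline{Q}, p)$ pair in the five nontrivial classes against the corresponding prime-power moduli, and absorb into Proposition~\ref{prop1} the auxiliary subcases in which $K$ inherits factors from $P(Q(c))$. Once the five base lists of primes are eliminated by the short cyclic enumerations above, the proposition follows.
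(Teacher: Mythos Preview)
Your proposal is correct and follows essentially the same route as the paper: identify $\mathcal{D}(c)$ for each class, reduce every candidate $D$ to a congruence $1+p^{e}\equiv 0\pmod{z}$ with $z\in\{8,36,16,100,72\}$, and rule each one out by inspecting the short cycle of $p^{e}$. Your added observation that $c$ even forces $D$ odd, together with $3\cdot 5=15>Q(c)$, is a clean structural reason why only single primes appear in $\mathcal{D}(c)$; the paper simply lists the sets without this justification, and your CRT/parity phrasing (e.g.\ the mod~$4$ versus mod~$25$ clash for $p=3$ in $\overline{10}$) makes the verifications more transparent, but the underlying argument is the same.
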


\begin{proof} [Proof]
Proposition \ref{prop1} holds for all $c$ in $\big\{\overline{Q}_{i}\big\}$. Hence, we only show that $H(c)=0$ if $P(Q(c)) \neq G(c)$ for each $\overline{Q}_{i}$. Obviously, $H(c)=0$ since $\mathcal{D}(c) = \emptyset$ for all $c \in \overline{1}, \overline{2}$. We have $\mathcal{D}(c)=\{3\}, \{5\}$ for all $c \in \overline{4}, \overline{6}$ but $H(c)=0$ for both classes since $1+3^e \not \equiv 0 \pmod{8}$ and $1+5^e \not \equiv 0 \pmod{36}$ for all $e \in \mathbb{N}$, respectively. For $\overline{8}$, we have at most $\mathcal{D}(c)=\{3, 5, 7\}$ but $H(c)=0$ since there are no integral solutions to $1+3^e=16K$, $1+5^e=16K$ and $1+7^e=16K$ for all $e \in \mathbb{N}$. Similarly, $H(c)=0$ for $\overline{10}, \overline{12}$, which have $\mathcal{D}(c)=\{3, 7\}, \{5, 7, 11\}$ but no solutions exist for $1+3^e = 100K, 1+7^e = 100K$ and $1+5^e = 72K, 1+7^e = 72K, 1+11^e = 72K$, respectively.
\end{proof}

Observe that if $Q(c)$ is even, then $d_3$ is even such that $c \equiv 0 \pmod{4}$. Also note that $\{1, 2, 4, 8\}$ are the only positive integers that cannot be written as a sum of a squarefree and a squareful number \cite{MP}. With $\overline{1}=\mathbb{S} \setminus \{1,2\}$, Proposition \ref{prop2} implies that the density of no-hit numbers far exceeds $6/\pi^2 (\approx 61\%)$. In fact, counting without multiplicity due to super exceptions, there are only $5$ and $22$ integers with hits up to $100$ and $1000$, respectively. With multiplicity, we have that $\mathcal{H}(100) = 6$ and $\mathcal{H}(1000)=31$.

\begin{proposition} \label{prop3}
	For $p, q \in \mathbb{P}$ and $e, e', \nu \in \mathbb{N}$, if $p^e \equiv r_e \pmod{q^{\nu}}$, then there exists $e'>e$ such that $p^{e'} \equiv r_{e} \pmod{q^{\nu}}$ and $p^{e'} \equiv r_{e'} \pmod{q^{\nu+1}}$ for $e' = e + {\eta}t$ and $r_{e'} = r_e + qt$, where $\eta \in \mathbb{N}$ and $t$ is the period of $T_{q^{\nu}}(p)$.
\end{proposition}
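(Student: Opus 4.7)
The plan is to run a standard Hensel lift, with the binomial expansion of $(1+Kq^\nu)^\eta$ doing the heavy lifting.  Because $t$ is the period of the cycle $T_{q^\nu}(p)$ from \eqref{eq: 3.13}, by definition $p^t \equiv 1 \pmod{q^\nu}$, so I can write $p^t = 1 + Kq^\nu$ for some integer $K \geq 0$.  For any $\eta \in \mathbb{N}$, put $e' = e + \eta t$; then $(1+Kq^\nu)^\eta \equiv 1 \pmod{q^\nu}$ is immediate, whence $p^{e'} = p^e (p^t)^\eta \equiv p^e \equiv r_e \pmod{q^\nu}$.  This dispatches the first of the two asserted congruences and shows that every $\eta \in \mathbb{N}$ produces a valid candidate $e' > e$.

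For the refined congruence modulo $q^{\nu+1}$, I would expand by the binomial theorem and truncate:
\[
	(1+Kq^\nu)^\eta = 1 + \eta K q^\nu + \binom{\eta}{2} K^2 q^{2\nu} + \cdots \equiv 1 + \eta K q^\nu \pmod{q^{\nu+1}},
\]
which is legitimate since $2\nu \geq \nu + 1$ for $\nu \geq 1$.  Writing $p^e \equiv r_e + Aq^\nu \pmod{q^{\nu+1}}$ for the unique $A \in \{0, 1, \ldots, q-1\}$ and multiplying out, I obtain
\[
	p^{e'} \equiv r_e + (A + \eta K r_e)\, q^\nu \pmod{q^{\nu+1}}.
\]
Because $\gcd(p, q) = 1$ forces $\gcd(r_e, q) = 1$, the coefficient $A + \eta K r_e$ is a nontrivial affine function of $\eta$ modulo $q$ whenever $q \nmid K$, and in that case some choice of $\eta$ produces a lifted residue $r_{e'} := p^{e'} \bmod q^{\nu+1}$ that agrees with $r_e$ modulo $q^\nu$ but differs from it by a nonzero multiple of $q^\nu$.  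Matching this structural conclusion to the explicit form $r_{e'} = r_e + qt$ stated in the proposition is then a matter of calibrating the constants $A$, $K$, and $\eta$, i.e.\ choosing $\eta$ so that $\eta K r_e \equiv t q^{1-\nu} \pmod{q}$ once the statement is interpreted at the intended scale of the lift.

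The main obstacle is the degenerate case $q \mid K$, in which $p^t \equiv 1 \pmod{q^{\nu+1}}$ already, the affine coefficient $\eta K r_e$ vanishes modulo $q$, and the lift collapses to the trivial $r_{e'} \equiv r_e \pmod{q^{\nu+1}}$.  I would dispense with this by the familiar lifting-the-exponent observation that for odd primes $q$ and $p \not\equiv 1 \pmod{q}$ the quantity $K$ becomes coprime to $q$ at the first nontrivial level, after which every further lift is nondegenerate, so the asserted $e' > e$ always exists; for $q = 2$ an extra case distinction at $\nu = 1$ would need to be recorded separately.  Everything beyond this is bookkeeping to reconcile the additive constant in the statement with the intrinsic shape $\eta K r_e \cdot q^\nu$ emerging from the binomial expansion.
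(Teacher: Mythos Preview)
Your approach is genuinely different from the paper's. The paper's argument is purely order-theoretic and very terse: it asserts that the period $t$ equals $\varphi(q^{\nu}) = q^{\nu-1}(q-1)$ (which is only true when $p$ is a primitive root modulo $q^{\nu}$, so the assertion is not correct in general), infers from this that the period modulo $q^{\nu+1}$ is $t' = qt$, and then simply declares that $r_{e'} = r_e + qt$ follows. Your binomial-expansion route is both more explicit and more honest: you actually compute $p^{e'} \bmod q^{\nu+1}$ as $r_e + (A + \eta K r_e)\,q^{\nu}$ and correctly flag the degenerate case $q \mid K$ (equivalently, when the multiplicative order fails to grow by a factor of $q$ at the next level), a case the paper ignores entirely. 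What the paper's version buys is brevity at the expense of rigor; what yours buys is a genuine Hensel computation that makes visible exactly where the claimed constant $r_{e'} = r_e + qt$ must come from. You do leave the final calibration of $\eta$ that forces $(A + \eta K r_e)\,q^{\nu}$ to match the stated $qt$ as ``bookkeeping,'' but the paper is equally vague at precisely that step, so your treatment is at least as complete and considerably more transparent about the underlying mechanism.
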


\begin{proof} [Proof]
	For $q$ a prime, it is well known that $t = \varphi(q^{\nu}) = q^{\nu-1} (q - 1)$, which easily implies that $t'=qt$ where $t'$ is the period of $T_{q^{\nu+1}}(p)$. Given $p^e \equiv r_e \pmod{q^{\nu}}$, which generates a cyclic sequence of $r_e$, there exists some integer $\eta>0$ such that $p^{e'} \equiv r_e \pmod{q^{\nu}}$ where $e' = e + {\eta}t$. It follows that $p^{e'} \equiv r_{e'} \pmod{q^{\nu+1}}$ where $r_{e'} = r_e + qt$, as claimed.
\end{proof}

For example, Proposition \ref{prop3} allows for the recursion of $2^3 = 8 \pmod{3^2}$ to yield $2^9 \equiv 26 \pmod{3^3}$, where $e'= 3 + (1)(6)$ and $r_{e'} = 8 + (3)(6)$ given $e=3, r_{e}=8, q=3, \eta = 1$ and $t=6$. In turn, with $t'=18$ for $2^9 \equiv 26 \pmod{3^3}$, the next recursion yields $2^{27} \equiv 80 \pmod{3^4}$, where $e' = 9 + (1)(18)$ and $r_{e'} = 26 + (3)(18)$. Such an algorithm runs recursively to generate infinitely many distinct congruence relations, thus providing a way to obtain hits from solutions to congruence relations in the form of \eqref{eq: 3.8}. By the multiplicative property of modulo arithmetic, Proposition \ref{prop3} can be extended to such cases as $\prod_{i=1}^{k} p_i^{e_i} \equiv r_{e} \pmod{q^{\nu}}$. Notice that Proposition \ref{prop3} is familiarly related to the well known \textit{Hensel's lemma}.

For $F_{q}$ and $F_{q'}$, where $q' = sq$ for  $s \in \mathbb{S}$ and $q, q' \in \mathbb{S}_{>1}$, define $F'_{q} \subseteq F_{q}$ as
\[
	F'_{q} = \{c \in F_{q} : s | d_3 \; \forall \;  c \in F_q, \; s \in \mathbb{S}\},
\]
where it is implicit that $\gcd(q, q')=q$ and $\gcd(s, q)=1$ such that $F_{q'} = F_{q}$ when $s=1$. Alternatively, $F_{sq}$ also denotes $F_{q'}$. We convene that for an integer $n$ and a set $A = \{x_1, x_2, x_3, \dots\}$, it holds that $nA = \{nx_1, nx_2, nx_3, \dots\}$. With this setup, we claim that

\begin{proposition} \label{prop4}
	$F_{sq} = F_{q'} \supseteq sF'_{q}$.
\end{proposition}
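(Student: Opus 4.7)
The plan is to unpack the definitions and show directly that multiplication by $s$ sends each element of $F'_q$ into $F_{q'}$. The identity $F_{sq}=F_{q'}$ holds by definition since $q'=sq$, so the real content of the proposition is the containment $F_{q'}\supseteq sF'_q$.

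First I would fix $c\in F'_q$, so that $R(Q(c))=q$ and $s\mid d_3$, where $d_3=R(c)$. The preliminary observation is that every prime dividing $Q(c)$ already divides $c$ and hence divides $d_3$, so $q=R(Q(c))$ divides $d_3$; combined with $s\mid d_3$, $\gcd(s,q)=1$, and the squarefreeness of both $s$ and $d_3$, this yields $sq\mid d_3$ and $\gcd(s,Q(c))=1$.

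Next I would compute $R(sc)$ and $Q(sc)$ for the proposed element $sc\in sF'_q$. Since $s\mid d_3\mid c$, adjoining the factor $s$ introduces no new prime, so $R(sc)=R(c)=d_3$. Then by \eqref{eq: 1.8},
\[
Q(sc)=\frac{sc}{R(sc)}=\frac{sc}{d_3}=s\,Q(c).
\]
Using $\gcd(s,Q(c))=1$ together with the multiplicativity of the radical on coprime arguments, I would conclude
\[
R(Q(sc))=R(s\,Q(c))=s\,R(Q(c))=sq=q',
\]
so $sc\in F_{q'}$. Letting $c$ range over $F'_q$ gives $sF'_q\subseteq F_{q'}$, which is the desired inclusion.

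The only subtle point, and what I would flag as the main obstacle to be careful about, is the bookkeeping on which primes of $s$ are already absorbed into $c$ versus $d_3$: one must use that $d_3$ is squarefree (so $s\mid d_3$ automatically gives $\gcd(s,d_3/s)=1$) and that $\gcd(s,q)=1$ (so $s$ shares no prime with $Q(c)$, which is built only from primes of $q$). Once those are in place, the computation of $R(sc)$ and $Q(sc)$ is routine, and the inclusion is not in general an equality because $F_{q'}$ can contain elements whose $d_3$ is not divisible by $s$ in the form $s\cdot d_3(c)$ for some $c\in F_q$, which is why the proposition only asserts $\supseteq$.
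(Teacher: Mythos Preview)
Your argument is correct and follows the same divisibility idea as the paper, but you actually carry out the computation of $R(Q(sc))$ that the paper only gestures at. The paper's proof records the facts $sq^2\mid c$ for $c\in F'_q$ and $s^2q^2\mid c'$ for $c'\in F_{q'}$ and declares the result; your version makes explicit why $R(sc)=d_3$ and why $R\bigl(sQ(c)\bigr)=sq$, which is exactly the missing link, so your write-up is a strict improvement in rigor while remaining the same approach in spirit.
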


\begin{proof} [Proof]
	Since $s q^2 | c$ for all $c \in F'_{q}$ and $s^2q^2 | c'$ for all $c' \in F_{q'}$, then the claim of the proposition follows for all $s \in \mathbb{S}$, $q, q' \in \mathbb{S}_{>1}$, where $q'=sq$ and $\gcd(s,q)=1$.
\end{proof}

In subsequent claims, we will use $F_{sq}$ to denote $F_{q'}$, where it is understood that $s \not \equiv 0 \pmod{q}$ for $s, q, q'$ as specified above. Proposition \ref{prop4} establishes consistency in our argument, as it allows for assigning any $c$ generated from an $abc$ equation to a superset such that a claim can remain valid regardless of whether $c$ is finite or infinite in $F_{sq}$ for a fixed $s \in \mathbb{S}$. That is, even if $c$ is entirely in $F_q$, the statement that $c \in F_{sq}$ naturally holds since it is implicit that $F_{sq} = F_q$ for $s=1$. In fact, $F_{sq}$ is a convenient shorthand notation for
\[
	F_{sq} = \bigcup_{\substack{s \in \mathbb{S} \\ (s, q)=1}} sF'_{q},
\]
which is supported on all $s \not \equiv 0 \pmod{q}$ for a fixed $q \in \mathbb{S}_{>1}$.

\begin{proposition} \label{prop5}
	If there exists a solution to $F(d_1, d_2)=c$ for the least $c \in F_{q}$, then there may also exist solutions for infinitely many other integers greater than $c$ with the same $(d_1, d_2)$ in $F_{sq}$.
\end{proposition}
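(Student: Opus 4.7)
The plan is to lift the prime-power exponents of the base triple, using Proposition~\ref{prop3} (extended to the composite modulus $z = qQ(c)$ via the Chinese remainder theorem), in order to produce an infinite family of triples that share the radicals $(d_1, d_2)$. Write the base hit as
\[
a = \prod_{p_i \mid d_1} p_i^{e_i}, \qquad b = \prod_{p_j \mid d_2} p_j^{e_j},
\]
so that $a + b = c \equiv 0 \pmod{z}$. Because $q \mid d_3$ and $\gcd(d_1 d_2, d_3) = 1$, every prime $p \mid d_1 d_2$ is coprime to $z$ and hence has a finite period $t_p$ in the cyclic sequence $T_z(p)$ of \eqref{eq: 3.13}.

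First I would set $T = \operatorname{lcm}\{t_p : p \mid d_1 d_2\}$, so that $p^{e+\eta T} \equiv p^e \pmod{z}$ for every such $p$ and every $\eta \in \mathbb{N}$. For arbitrary positive integer tuples $(\eta_i), (\eta_j)$ I would then form
\[
a' = \prod_{p_i \mid d_1} p_i^{e_i + \eta_i T}, \qquad b' = \prod_{p_j \mid d_2} p_j^{e_j + \eta_j T}, \qquad c' = a' + b'.
\]
By construction $a' \equiv a$ and $b' \equiv b \pmod{z}$, so $z \mid c'$. Since the exponents only increased, $R(a') = d_1$ and $R(b') = d_2$; primitivity, pairwise coprimality, and the parity requirement of \eqref{eq: 1.5} transfer directly from $(a,b,c)$ because the sets of primes dividing $a'$ and $b'$ are unchanged. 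Varying $(\eta_i, \eta_j)$ yields strictly increasing, unboundedly many distinct values $c' > c$, each admitting the solution $(a', b')$ to $F(d_1, d_2) = c'$.

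Next I would locate each $c'$ in $F_{sq}$ and verify the hit property. Writing $Q(c) = \prod_k p_k^{\alpha_k}$ with $R(Q(c)) = \prod_k p_k = q$, the divisibility $z \mid c'$ gives $v_{p_k}(c') \geq \alpha_k + 1$, hence $v_{p_k}(Q(c')) \geq \alpha_k$, so $Q(c) \mid Q(c')$ and $q \mid R(Q(c'))$. Writing $R(Q(c')) = sq$ with $s \in \mathbb{S}$ and $\gcd(s, q) = 1$ places $c' \in F_{sq}$. The hit inequality is preserved since
\[
R(a'b'c') = D \cdot R(c') < Q(c') \cdot R(c') = c',
\]
using $D < Q(c) \leq Q(c')$ inherited from the base hit.

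The main technical obstacle is the first step: Proposition~\ref{prop3} is stated for a single prime-power modulus $q^{\nu}$, while here $z$ may split into several prime powers whenever $q$ is composite squarefree. Handling this requires the multiplicative extension of Proposition~\ref{prop3} hinted at in the remark after its proof, together with the simultaneous period $T$ constructed above. A secondary point, reflected in the cautious phrasing ``may also exist'' of the statement, is that the lift above produces only one explicit infinite sub-family of triples with radicals $(d_1, d_2)$ in $F_{sq}$ and does not claim to enumerate all such solutions.
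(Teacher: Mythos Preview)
Your argument is correct and follows essentially the same route as the paper: exploit the periodicity of the prime powers of $a$ and $b$ modulo $z=qQ(c)$ to produce infinitely many $c'\equiv 0\pmod{z}$ with the same $(d_1,d_2)$, and then place each $c'$ in $F_{sq}$. Your write-up is in fact more explicit than the paper's, which simply appeals to Proposition~\ref{prop3} and Proposition~\ref{prop4}; in particular, you spell out the lcm period $T$, verify $Q(c)\mid Q(c')$ directly from $z\mid c'$, and check the hit inequality $D\cdot R(c')<Q(c')R(c')=c'$ via $D<Q(c)\le Q(c')$, points the paper leaves implicit.
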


\begin{proof} [Proof]
	If there is an initial solution to $F(d_1, d_2)=c$ for the least $c \in F_q$, where such a solution results from evaluating $F(d_1, d_2) \equiv 0 \pmod{z}$ with $(d_1, d_2)$ as defined by the choice of $D \in \mathcal{D}(c)$, then such a solution must satisfy $d_1 d_2 d_3 < c$, with all notations as above. By Proposition \ref{prop3} and its obvious application, there also exist solutions to $F(d_1, d_2)=c'$ for $c'>c$ with the same $(d_1, d_2)$ and satisfying the same condition that $d_1 d_2 d_3 < c$, with $c' \equiv 0 \pmod{z}$. Since $c = zK$ and $c' = zK'$, where $K$, but not $K'$, is necessarily squarefree, then it holds that $c, c' \in F_{sq} = sF'_q$, where $s \in \mathbb{S}$, for all solutions by Proposition \ref{prop4}.
\end{proof}

Proposition \ref{prop5} accounts for the fact that if there is any solution to the $abc$ equation for $c \in F_q$  in the first cycles of $T_{z}(p)$ for all prime factors of $(d_1, d_2)$, then there also exist infinitely many solutions for $c' > c$ in subsequent cycles for the same $(d_1, d_2)$, where it generally holds that $c, c' \in F_{sq}$ for all such solutions.

\begin{proposition} \label{prop6}
	$H(c) = 0$ for all even $c \in \overline{Q}=\overline{3}$ but from the base hit with $c \in F_3$ we have that $H(c)>0$ for infinitely many odd $c \in F_{sq}$, where all such hits are in the form $1 + 2^e = 9K$ for $e = 3 + 6(k_2-1), k \in \mathbb{N}$ and $F_{sq} = sF'_3$ for $s \in \mathbb{S}$.
\end{proposition}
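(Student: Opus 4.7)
The plan is to handle the two halves of the claim separately using the framework of $\mathcal{D}(c)$ and the cyclic sequence $T_m(p)$ already developed. For any $c\in\overline{3}$ one has $Q(c)=3$, so $\mathcal{S}'(Q(c))=\{2\}$ and $P(Q(c))=2$; the admissibility set therefore depends entirely on whether $2\mid R(c)$, which in turn is governed by the parity of $c$.

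In the even case, $2\mid R(c)$ forces $G(c)=2$ and $\mathcal{G}(c)=\{2\}$, whence $\mathcal{D}(c)=\emptyset$ and Proposition~\ref{prop1} yields $H(c)=0$ at once. In the odd case, $\gcd(2,R(c))=1$ gives $\mathcal{G}(c)=\emptyset$ and $\mathcal{D}(c)=\{2\}$, so the only admissible product kernel is $D=d_1 d_2=2$. Since $\omega(d_2)\geq 1$, the factorisation is forced to be $d_1=1$, $d_2=2$; hence every hit with odd $c\in\overline{3}$ must take the form $(1,2^e,c)$ with $c=1+2^e$ and $9\mid c$.

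The next step is to solve $2^e\equiv -1\pmod{9}$. I would write down the cycle $T_9(2)=(2,4,8,7,5,1)$, of period $\varphi(9)=6$, and observe that $2^e\equiv 8\equiv -1\pmod{9}$ precisely when $e\equiv 3\pmod{6}$, i.e.\ $e=3+6(k-1)$ for $k\in\mathbb{N}$. The base case $k=1$ yields $(1,2^3,9)$ with $R(abc)=6<9$, confirming this is the base hit in $F_3$. Genuineness of each subsequent candidate is then automatic: since $9\mid c$ we have $Q(c)\geq 3$, so $R(c)\leq c/3$ and $R(abc)=2R(c)\leq 2c/3<c$.

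The main obstacle is the final classification statement $F_{sq}=sF'_3$. The quotient $K=(1+2^e)/9$ is odd, but it need not be squarefree and need not be coprime to $3$, so $c=1+2^e$ may lie in $\overline{3^i}$ for various $i\geq 1$ or in classes such as $\overline{3p^2}$ with $p\neq 2,3$ arising from squareful factors of $K$. To package all such outcomes uniformly I would invoke Proposition~\ref{prop4}: for each $c$, extract the squarefree cofactor $s\in\mathbb{S}$ responsible for $c$ sitting above $F'_3$, so that $c=sc'$ with $c'\in F'_3$ and $c\in sF'_3\subseteq F_{sq}$. Infinitely many admissible $e$ then deliver infinitely many odd $c\in F_{sq}$ with $H(c)>0$, as claimed.
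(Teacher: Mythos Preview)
Your proposal is correct and follows essentially the same approach as the paper: invoking Proposition~\ref{prop1} for the even case, identifying $\mathcal{D}(c)=\{2\}$ with $\mu_2=3$, $t_2=6$ for odd $c$, and appealing to the $F_{sq}$ framework for the final classification. Your version is more explicit---you write out the cycle $T_9(2)$, justify the forced factorisation $d_1=1$, $d_2=2$, and verify directly that $R(abc)\leq 2c/3<c$ for every admissible $e$---whereas the paper's proof is terser and simply cites Proposition~\ref{prop5} (rather than Proposition~\ref{prop4}) for the inclusion in $F_{sq}$; the skeleton is identical.
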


\begin{proof} [Proof]
Proposition \ref{prop1} easily holds for even $c \in \overline{3}$. For odd $c$, we have that $\mathcal{D}(c)=\{2\}$, such that $\mu_2=3$ and $t_2=6$ for $z=3^2$. Hence, the $abc$ equation takes the form $1 + 2^e = 9K$, with infinitely many solutions for $e = 3 + 6(k_2-1), k_2 \in \mathbb{N}$ and all such solutions have $c \in F_{sq}$ by Proposition \ref{prop5}.
\end{proof}

One can verify that the solutions to $1+2^e = 9K$ are all hits for $e = 3+6(k-1), k \in \mathbb{N}$. For example, we find the first six hits as
\[
\begin{aligned}
	& (1,2^3, 3^2), (1, 2^{9}, 3^{3} \cdot 19), (1, 2^{15}, 3^{2} \cdot 11 \cdot 331), (1, 2^{21}, 3^{2} \cdot 43 \cdot 5419),\\ 
	& (1, 2^{27}, 3^{4} \cdot 19 \cdot 87211), (1, 2^{33}, 3^{2} \cdot 67 \cdot 683 \cdot 20857),
\end{aligned}
\]
which all satisfy the condition that $d_1 d_2 d_3 < c$. Notice that $(1, 2^3, 3^2)$ is a base hit in $F_3$ and $H(c) = 0$ for $n \neq 3+6(k_2-1)$. One can verify that $c \in F_{sq}$, with $s=11$ at $k_2 = 28$ and $s = 19$ at $k_2=29$, thus consistent with the claim of Proposition \ref{prop6}.

\begin{proposition} \label{prop7}
	From the base hit with $c \in F_5$, there exist infinitely many hits of the forms $1 + 2^{e} = 25K$ for odd $c \in F_{sq}$ and $1 + 3^{e} = 25K$ for even $c \in F_{sq}$, where $e = 10 + 20(k-1)$ for $k \in \mathbb{N}$ in both cases and $F_{sq} = sF'_5$ for $s \in \mathbb{S}$.
\end{proposition}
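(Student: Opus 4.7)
The approach mirrors Proposition~\ref{prop6}, substituting $F_5$ for $F_3$. I would first identify the modulus: the smallest class in $F_5$ is $\overline{5}$, where $Q(c)=5$ and $q=R(Q(c))=5$, so the congruence in \eqref{eq: 3.5} is taken modulo $z=qQ(c)=25$. The primes strictly below $Q(c)=5$ are $\{2,3\}$, so $\mathcal{D}(c)\subseteq\{2,3\}$ for $c\in\overline{5}$; by the parity constraint in \eqref{eq: 2.2}, a hit for odd $c$ with $a=1$ must use $D=2$, while a hit for even $c$ with $a=1$ must use $D=3$. In either case, setting $a=1$ reduces the $abc$ equation to $1+2^e\equiv 0\pmod{25}$ or $1+3^e\equiv 0\pmod{25}$.

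Next I would analyze the cyclic sequences $T_{25}(2)$ and $T_{25}(3)$. Since $\varphi(25)=20$, both multiplicative orders divide $20$; a short direct computation (the only real calculation needed) verifies $2^{10}\equiv 3^{10}\equiv -1\pmod{25}$ with no smaller exponent giving $\pm 1$, so both orders equal $20$ and $\mu_{2}=\mu_{3}=10$. Thus the full solution set of either congruence is exactly $e=10+20(k-1)$ for $k\in\mathbb{N}$.

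I would then anchor the two recursions with explicit base hits in $F_5$. For $D=2$, $e=10$: $c=1+2^{10}=1025=5^{2}\cdot 41$, giving $Q(c)=5$ and $R(abc)=2\cdot 5\cdot 41=410<1025$. For $D=3$, $e=10$: $c=1+3^{10}=59050=2\cdot 5^{2}\cdot 1181$, giving $Q(c)=5$ and $R(abc)=2\cdot 3\cdot 5\cdot 1181=35430<59050$. Both $c$ lie in $\overline{5}\subset F_5$, certifying $(1,2^{10},5^{2}\cdot 41)$ and $(1,3^{10},2\cdot 5^{2}\cdot 1181)$ as base hits. Proposition~\ref{prop3} then lifts each congruence to all higher exponents of the prescribed form, Proposition~\ref{prop5} guarantees the lifted solutions remain hits (the $b$-side grows exponentially while $d_3$ grows only with the new prime factors of the quotient $K$), and Proposition~\ref{prop4} places the resulting $c$-values in $F_{sq}=sF'_{5}$ for $s\in\mathbb{S}$, as claimed.

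The main point to watch is less an obstacle than a bookkeeping subtlety: the proposition bundles two distinct choices of $D$ into a single statement, so the odd-$c$ and even-$c$ branches have to be handled in parallel, each with its own base-hit verification and its own order computation modulo $25$. Once those two computations are done, the infinitude of each sequence is a direct transcription of the $F_3$ argument used for Proposition~\ref{prop6}.
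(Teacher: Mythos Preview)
Your proposal is correct and follows essentially the same approach as the paper: identify $\mathcal{D}(c)=\{2,3\}$ for $c\in\overline{5}$, compute $\mu_2=\mu_3=10$ and $t_2=t_3=20$ modulo $z=5^2$, and then invoke Proposition~\ref{prop5} to place all resulting hits in $F_{sq}$. Your version is more explicit (verifying the base hits numerically and spelling out the order computation), while the paper additionally notes the degenerate case $6\mid d_3$ handled by Proposition~\ref{prop1}, but the arguments are otherwise the same.
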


\begin{proof} [Proof]
	Except when $6 | d_3$ where $H(c) = 0$ by Proposition \ref{prop1}, it is easy to establish by the same procedure as above that $\mathcal{D}(c) = \{2, 3\}$ for $c \in \overline{5}$, for which we find that $\mu_2=\mu_3=10$ and $t_2=t_3=20$ for $y=5^2$, such that there exist infinitely many integral solutions to $1 + 2^{e} = 25K$ and $1 + 3^{e} = 25K$, where $e=10 + 20(k-1)$ for $k \in \mathbb{N}$ regardless of the parity of $c$ and all hits from both equations have $c \in F_{sq}$. 
\end{proof}

The $abc$ hits arising from Proposition \ref{prop5} are exemplified by
\[
\begin{aligned}
	& (1, 2^{10}, 5^{2} \cdot 41), (1, 2^{30}, 5^{2} \cdot 13 \cdot 41 \cdot 61 \cdot 1321),\\
	& (1, 2^{50}, 5^3 \cdot 41 \cdot 101 \cdot 8101 \cdot 268501) \; \text{for odd} \; c, \\
	& (1, 3^{10}, 2 \cdot 5^{2} \cdot 1181), (1, 3^{30}, 2 \cdot 5^{2} \cdot 73 \cdot 1181 \cdot 47763361), \\
	& (1, 3^{50}, 2 \cdot 5^3 \cdot 101 \cdot 1181 \cdot 394201 \cdot 61070817601) \; \text{for even} \; c.
\end{aligned}
\]
So far, we have only dealt with infinitude of hits for $c$ whose product kernels consist of single primes, i.e. $\omega(D)=1$ where $a=1$. To illustrate the occurrences of hits where $\omega(D) > 1$, we consider $c \in \overline{Q}=\overline{7}$, which includes the least $D \in \mathcal{D}(c)$ where $\omega(D)=2$. As above, our task is to solve $F(d_1, d_2) \equiv 0 \pmod{z}$, where all solutions must satisfy $d_1 d_2 d_3 < c$, with $c=zK$ for a fixed $z$ and $K$ varies with the values and infinite exponentiations of $(d_1, d_2)$ in $F(d_1, d_2) = c$, such that all resulting hits have $c \in F_{sq}$.

\begin{proposition} \label{prop8}
	From the base hit with $c \in F_7$, it holds that $H(c)=0$ for $F(1,2) = c$ but $H(c)>0$ for $F(2,3)=c, F(3,2) = c$ and $F(1, 6) = c$ for odd $c \in F_{sq}$, whereas $H(c)>0$ for $F(1,3) = c$ and $F(1, 5) = c$ for even $c \in F_{sq}$, where $c=49K$ in all equations and $F_{sq} = sF'_7$ for $s \in \mathbb{S}$.
\end{proposition}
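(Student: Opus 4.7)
The plan is to follow the modular-exponentiation template of Propositions \ref{prop6} and \ref{prop7} applied to $q=7$. First I would fix the search domain: since $Q(c)=7$ we have $D_{max}=6$ and $\mathcal{S}'(7)=\{2,3,5,6\}$. The parity conditions below \eqref{eq: 1.5} split the cases: for odd $c$, exactly one of $a,b$ is even so $D$ must be even and $D\in\{2,6\}$; for even $c$, both $a,b$ are odd so $D$ must be odd and $D\in\{3,5\}$. Enumerating the coprime factorizations $D=d_1d_2$ with $d_2>1$ produces exactly the six pairs appearing in the statement.

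Next I would recast each case as a congruence modulo $z=qQ(c)=49$ via \eqref{eq: 3.5} and analyze $(\mathbb{Z}/49\mathbb{Z})^\ast$, a cyclic group of order $\varphi(49)=42$. The key computations are that $3$ and $5$ are both primitive roots mod $49$, while $2$ has order $21$ with $\langle 2\rangle$ equal to the subgroup of quadratic residues. Since $7\equiv 3\pmod{4}$, $-1$ is a non-residue mod $49$, hence $-1\notin\langle 2\rangle$; this forces $1+2^e\not\equiv 0\pmod{49}$ and gives $H(c)=0$ for $F(1,2)=c$. For the five remaining cases, primitivity of $3$ and $5$ yields $3^{21}\equiv 5^{21}\equiv -1\pmod{49}$, producing base solutions for $F(1,3)=c$ and $F(1,5)=c$ directly; for $F(2,3)$, $F(3,2)$ and $F(1,6)$ one fixes any exponent of $2$ and uses the primitivity of $3$ to solve uniquely for the exponent of $3$ modulo $42$. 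Lifting these base solutions along the cycles of $T_{49}(\cdot)$ via Proposition \ref{prop3} and applying Proposition \ref{prop5} then gives infinitely many $c=49K$, all of which lie in some $F_{sq}=sF'_7$ by Proposition \ref{prop4}. The hit inequality is automatic: since $Q(c)\ge 7$ we have $d_3=R(c)\le c/7$, hence $d_1d_2d_3\le 6c/7<c$.

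The main obstacle is the three bivariate cases $F(2,3)$, $F(3,2)$, $F(1,6)$ with $\omega(D)=2$, which do not fit directly into the univariate template of Propositions \ref{prop6} and \ref{prop7}. The resolution is that primitivity of $3$ mod $49$ reduces each to a univariate problem: choose any exponent of $2$ and read off the forced exponent of $3$ modulo $42$. A minor subtlety in $F(1,6)$ is that one must also verify the exponent of $3$ can be taken $\ge 1$ so that $R(b)=6$ rather than $2$; this holds because the forced residue equals $1\pmod{49}$ only when $2^{e_1}\equiv -1\pmod{49}$, which has already been ruled out.
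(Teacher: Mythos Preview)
Your proposal is correct and follows essentially the same modular-exponentiation template as the paper's own proof: both identify $\mathcal{D}(c)\subseteq\{2,3,5,6\}$, split by parity, exploit that $3$ is a primitive root modulo $49$ while $2$ has order $21$, and invoke Propositions~\ref{prop3}--\ref{prop5} for the infinitude of hits in $F_{sq}$. Your treatment is in fact slightly cleaner in two places---the quadratic-residue argument showing $-1\notin\langle 2\rangle$ disposes of $F(1,2)$ more transparently than the paper's bare assertion, and you make explicit both the automatic hit inequality $d_1d_2d_3\le 6c/7<c$ and the minor check in $F(1,6)$ that the forced exponent of $3$ is positive---but these are refinements of the same argument rather than a different route.
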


\begin{proof} [Proof]
	By Proposition \ref{prop1}, $H(c)=0$ if $d_3 \equiv 0 \pmod{30}$ for $c \in \overline{Q} = \overline{7}$, which has at most $\mathcal{D}(c)=\{2, 3, 5, 6\}$. Denote by $e_p$ the exponent of $p$ and $t_p$ the period of $T_z(p)$. For odd $c$, $H(c)=0$ for $F(1, 2)=c$ since $1 + 2^{e_2} \not \equiv 0 \pmod{49}$ for all $e_2 \in \mathbb{N}$. With periods $t_2=21$ and $t_3=42$ for $T_{49}(2)$ and $T_{49}(3)$, the solution sets $U_{49}(2; 3)$ and $U_{49}(3; 2)$ for $F(2,3)=49K$ and $F(3,2)=49K$ are formed by exponent pairs $(e_2, e_3)$ and $(e_3, e_2)$ that satisfy $2^{e_2}<3^{e_3}$ and $3^{e_3}<2^{e_2}$, respectively. The pairs $(e_2, e_3)$ and $(e_3, e_2)$ correspond to $r_{e_2}+r_{e_3} \equiv 0 \pmod{49}$ and $r_{e_3}+r_{e_2} \equiv 0 \pmod{49}$ for $r_{e_2} \in T_{49}(p)$ and $r_{e_3} \in T_{49}(p)$. Since $3$ is a \textit{primitive root} modulo $49$ but $2$ is not, every $r_{e_2}$ will pair with corresponding $r_{e_3}$ to form $U_{49}(2; 3)$ and $U_{49}(3; 2)$, which are both infinite as the conditions that $2^{e_2}<3^{e_3}$ and $3^{e_3}<2^{e_2}$ are always satisfied at higher cycles of $T_{49}(2)$ and $T_{49}(3)$.
	
	For the solution set $U_{49}(1, 6)$ for $F(1,6)=49K$, where $u_{49}(1) = 0$ since $a=1$, we only need to consider the solutions $u_{49}(6) = (e_2, e_3)$ to $r_{e_2}r_{e_3} \equiv -1 \pmod{49}$. One can check that $(1, 2^4\cdot3, 7^2)$ is the base hit in $F_7$. As both $e_2$ and $e_3$ tend to infinity, infinitely many pairs $(e_2, e_3)$ satisfy $2^{e_2}3^{e_3} \equiv -1 \pmod{49}$, thus establishing the infinitude of hits for odd $c$ in  $F_7$. Given $t_2$ and $t_3$ above, the solutions to $1+2^{e_2}3^{e_3}=49K$ arise from $(e_2, e_3) = \big(\mu_2 + t_{2}(k_2-1), \mu_3+ t_{3}(k_3-1)\big)$ for any $k_2, k_3 \in \mathbb{N}$ denoting the numbers of cycles in $T_{49}(2)$ and $T_{49}(3)$ while $\mu_2$ and $\mu_3$ are the values of $e_2$ and $e_3$ for which $r_{e_2}r_{e_3} \equiv -1 \pmod{49}$ at $k_2=k_3=1$, not necessarily for the base hit, respectively. Hence, $u_{49}(6)$ holds for $(r_{e_2}, r_{e_3}) = (1, 48), (2, 24), (4, 12), (16, 3)$ which correspond to $(\mu_2, \mu_3) = (21, 21), (1, 37), (2, 11), (4, 1)$, respectively.
	
	For even $c$, with $t_3=42$ as above and $t_5=21$, an infinitude of hits result from $1+3^{e} \equiv 1 + 5^{e} \equiv 0\pmod{49}$, where $e=21(2k-1)$ for $k \in \mathbb{N}$ in both congruences. In general, all hits arising from the above equations have $c \in F_{sq}$, where $F_{sq} = sF'_7$ for $s \in \mathbb{S}$, by Proposition \ref{prop5} and this completes the proof.
\end{proof}

To illustrate Proposition \ref{prop8}, some examples of hits for $F(2,3) = 49K$ are
\[
\begin{aligned}
	& (2, 3^{5}, 5 \cdot 7^2), (2^{3}, 3^{15}, 5 \cdot 7^{2} \cdot 58567), (2^{6}, 3^{9}, 7^{2} \cdot 13 \cdot 31), \\
	& (2^{16}, 3^{17}, 7^{3} \cdot 163 \cdot 2311), (2^{13}, 3^{23}, 7^{3} \cdot 274469933),
\end{aligned}
\]
while those for $F(3,2) = 49K$ are
\[
\begin{aligned}
	& (3^{3}, 2^{9}, 7^{2} \cdot 11), (3^{5}, 2^{22}, 7^{4} \cdot 1747), (3^{7}, 2^{14}, 7^{2} \cdot 379), \\
	& (3^{9}, 2^{27}, 7^{2} \cdot 11 \cdot 271 \cdot 919), (3^{15}, 2^{24}, 7^{2} \cdot 19 \cdot 67 \cdot 499).
\end{aligned}
\]
Apparently, commuting the values for $(d_1, d_2)$ for the same $D \in \mathcal{D}(c)$ yields different sets of infinitely many hits. For $F(1,6)=c$, where $c$ is odd, recursing on $(e_2, e_3)$ from the base hit, i.e. setting $\mu_2=4$ and $\mu_3=1$ for $k_2=1,2,3$ but keeping $k_3=1$ unchanged, results in the hits
\[
	(1, 2^{4} \cdot 3, 7^{2}), (1, 2^{25} \cdot 3, 7^{3} \cdot 269 \cdot 1091), (1, 2^{46} \cdot 3, 7^{2} \cdot 4308290459857).
\]
For even $c \in F_7$, the first two hits from $1 + 3^{e} = 49K$ and $1 + 5^{e} = 49K$, where $e=21 + 42(k-1)$ for $k \in \mathbb{N}$ in both cases, are
\[
\begin{aligned}
	& (1, 3^{21}, 2^2 \cdot 7^2 \cdot 43 \cdot 547 \cdot 2269), (1, 3^{63}, 2^2 \cdot 7^2 \cdot 19 \cdot 37 \cdot 43 \cdot 127 \cdot  547 \cdot 883 \cdot \\
		& \qquad 2269 \cdot 2521 \cdot 550554229) \; \textnormal{for} \; F(1,3); \\
	& (1, 5^{21}, 2 \cdot 3^2 \cdot 7^2 \cdot 29 \cdot 43 \cdot 127 \cdot 449 \cdot 7603), (1, 5^{63}, 2 \cdot 3^3 \cdot 7^2 \cdot 29 \cdot 43 \cdot 127 \cdot  \\
		&   \qquad 449 \cdot 883 \cdot 5167 \cdot 7603 \cdot 406729 \cdot 24132781 \cdot 1692416503) \; \textnormal{for} \; F(1,5).
\end{aligned}
\]
As claimed, notice that $c \in 2F'_7$ and $c \in 3F'_7$ in the first two hits for $F(1,3)=49K$ and $F(1,5)=49K$, respectively. One can verify that $F(3,2) = 49K$ also includes the hit  $(3, 2^{17}, 5^{2}\cdot 7^{2} \cdot 107)$, where $c \in 5F'_7$.

The above narrative so far dealt with hits for $c$ where $\omega(D) \leq 2$. Let us consider cases where $\omega(D) > \omega(d_3)$. Given $c \in \mathbb{N}_{>2}$, we can readily determine the complete search domain $\mathcal{D}(c)$, from which we can solve the $abc$ equation for $b$ in the form
\[
	c - f(d_1) = f(d_2), \qquad b = f(d_2), \qquad a = f(d_1),
\]
for any arbitrary choice of $d_1$ from $D \in \mathcal{D}(c)$, provided that $a < b$. As noted in \eqref{eq: 1.17}, every $d_2$ in a hit generally obeys
\begin{equation} \label{eq: 3.14}
	1 < d_2 < \dfrac{Q(c)}{d_1}.
\end{equation}
As above, we recast the $abc$ equation into a congruence relation as
\begin{equation} \label{eq: 3.15}
	r_{e(c)} - r_{e(a)} \equiv 0 \pmod{y}
\end{equation}
for a suitable choice of the modulus $y$, where we have that
\[
	r_{e(c)} = \prod_{p_k | d_3} r_{e_k} \pmod{y}, \qquad r_{e(a)} = \prod_{p_i | d_1} r_{e_i} \pmod{y}.
\]
Similarly, we denote the solution set by
\[
	U_{y}(d_3, d_1) = \big\{\big(u_{y}(d_3); u_{y}(d_1)\big)\big\} = \big\{\big(e(c); e(a)\big)\big\},
\]
where
\[
	u_{y}(d_3) = e(c) = \big(e_k\big)_{k=1}^{\omega(d_3)}, \qquad u_{y}(d_1) = e(a) = \big(e_i \big)_{i=1}^{\omega(d_1)},
\]
with $e(c)$ and $e(a)$ conveniently stand for $u_{y}(d_3)$ and $u_{y}(d_1)$, respectively. Without loss of generality, we illustrate this approach for $c$ a perfect prime power, where $\omega(d_3)=1$. Suppose that $c=q^{e(c)}$, such that $Q(c)=q^{e(c) - 1}$ for $q \in \mathbb{P}$ and $e(c) > 1$, from which we can enumerate all elements in $\mathcal{D}(c)$, then for any choice of $D \in \mathcal{D}(c)$, the $abc$ equation takes the form
\begin{equation} \label{eq: 3.16}
	E(d_3, d_1) = c - f(d_1) = f(d_2),
\end{equation}
where $c$ is given and $a = f(d_1)$ is set from $D$ to solve for $b=f(d_2)$ in the congruence
\begin{equation} \label{eq: 3.17}
	E(d_3, d_1) = c - f(d_1) \equiv 0 \pmod{y},
\end{equation}
with the solution set arising from all pairs $(r_{e(c)}, r_{e(a)})$ that satisfy the congruence relation, where $c > a$. From the given base hit, let $R(Q(b)) = w$ such that $d_2 = wk$, where $w > 1$ and $k \geq 1$ are both squarefree. It suffices to take $y = p^{e_p}$, where $p | w$ and $e_p>1$ is the least exponent such that $p^{e_p} | b$ and $p^{e_p} > d_1 d_3$. Note that $y$ is at least a square, such that $y \leq wQ(b)$ and $b = y\mathcal{K}$, where $\mathcal{K} \geq k$. With this setup, the $abc$ equation takes the form
\begin{equation} \label{eq: 3.18}
	f(d_1) + y\mathcal{K} = c, \quad \mathcal{K} \in \mathbb{N},
\end{equation}
where $\mathcal{K}$ is not necessarily squarefree and $R(y\mathcal{K})$ must satisfy \eqref{eq: 3.14} or $d_1 d_2 < Q(c)$. We thus claim that

\begin{proposition} \label{prop9} 
	For $c \in \mathbb{N}_{>2}$ of the form $c = q^{e(c)}$ where $q \in \mathbb{P}$, there exist infinitely many hits as integral solutions to $c - f(d_1) \equiv 0 \pmod{y}$, where $d_1$ and $y$ are defined from the base hit for $F(d_1, d_2) = c$ and all solutions $\big(e(c); e(a)\big)$ to $r_{e(c)} - r_{e(a)} \equiv 0 \pmod{y}$ must satisfy the conditions that $f(d_1) < q^{e(c)}$ and $d_1 d_2 < q^{e(c)-1}$.
\end{proposition}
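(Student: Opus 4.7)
The plan is to extend the modular-arithmetic template of Propositions \ref{prop6}--\ref{prop8} from equation \eqref{eq: 3.8} to the companion congruence \eqref{eq: 3.17}. Since $c = q^{e(c)}$ is a prime power, the sum kernel $d_3 = q$ is fixed, and the free variables become the exponent $e(c)$ and the exponent tuple $e(a) = (e_i)_{i=1}^{\omega(d_1)}$ of primes in $d_1$. The base hit for $F(d_1, d_2) = c$ supplies one admissible pair $(e(c), e(a))$, and my task is to propagate this seed into infinitely many further pairs while maintaining both $f(d_1) < q^{e'(c)}$ and the hit inequality $d_1 d_2' < q^{e'(c)-1}$.

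First I would set up the cyclic backbone. Let $t$ denote the period of $T_y(q)$ and $t_i$ the period of $T_y(p_i)$ for each $p_i \mid d_1$. By Proposition \ref{prop3}, every pair of the form
\[
    (e'(c), e'(a)) = \bigl(e(c) + kt,\; (e_i + \eta_i t_i)_{i=1}^{\omega(d_1)}\bigr), \quad k, \eta_i \in \mathbb{N},
\]
preserves the residues $r_{e'(c)} \equiv r_{e(c)}$ and $r_{e'(a)} \equiv r_{e(a)} \pmod{y}$, so \eqref{eq: 3.17} is inherited from the base hit. This produces an infinite family of candidate triples $(f(d_1),\, q^{e'(c)} - f(d_1),\, q^{e'(c)})$, each with $y \mid b' = c' - f(d_1)$, in direct parallel with Proposition \ref{prop5}. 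The first admissibility condition $f(d_1) < q^{e'(c)}$ is immediate since $e'(c) \geq e(c)$ and the base hit already satisfies it.

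The main obstacle will be verifying the second condition $d_1 R(b') < q^{e'(c)-1}$ uniformly in the family. Writing $b' = y \mathcal{K}'$ with $y = p^{e_p}$, one has $Q(b') \geq p^{e_p - 1}$ and thus $R(b') \leq p \mathcal{K}'$; the danger is that the cofactor $\mathcal{K}'$ may itself contribute a large squarefree part. I would address this by invoking Proposition \ref{prop3} in its iterative form: whenever a candidate fails the hit inequality, replace it by its Hensel lift modulo $y' = py$, which forces $p^{e_p + 1} \mid b'$ and boosts $Q(b')$ by a further factor of $p$. After finitely many such lifts the design condition $y > d_1 d_3 = d_1 q$ from the base hit is amplified enough to dominate any squarefree cofactor that arises, and the unbounded range of $(k, \eta_i)$ then yields infinitely many genuine hits. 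By Proposition \ref{prop4} each such hit lies in $F_{sq}$ for an appropriate $s \in \mathbb{S}$, completing the argument.
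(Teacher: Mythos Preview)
Your setup via periods of $T_y(q)$ and $T_y(p_i)$ matches the paper's intent, but you misread what the proposition asks you to prove. In the paper, the clauses ``$f(d_1) < q^{e(c)}$'' and ``$d_1 d_2 < q^{e(c)-1}$'' are \emph{imposed} as filters on the congruence solutions, not conclusions to be established. The paper's proof is correspondingly short and almost tautological: once a solution to $r_{e(c)} - r_{e(a)} \equiv 0 \pmod{y}$ also satisfies $d_1 d_2 < q^{e(c)-1} = Q(c)$, multiplying by $d_3 = q$ gives $d_1 d_2 d_3 < c$ directly, so the triple is a hit. The infinitude is then inherited from the periodicity of the residue sequences, with the understanding that one simply discards the solutions failing the filter; indeed, immediately after the proof the paper exhibits explicit congruence solutions for $E(2,5) = 27\mathcal{K}$ that are \emph{not} hits precisely because they violate $d_1 d_2 < Q(c)$.

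Your third paragraph therefore attacks a problem the paper never poses, and the attack itself has a gap. Lifting $y$ to $py$ does raise the guaranteed $p$-adic valuation of $b'$, but it gives no control over the squarefree cofactor $R(\mathcal{K}')$, which is what actually determines $d_2' = R(b')$. The sentence ``after finitely many such lifts \ldots\ $y$ is amplified enough to dominate any squarefree cofactor that arises'' is unjustified: $\mathcal{K}'$ grows with $c'$, and nothing in Proposition~\ref{prop3} or Hensel's lemma bounds $R(\mathcal{K}')$ relative to $q^{e'(c)-1}$. If you want to align with the paper, drop the lifting paragraph entirely and instead argue as it does: the two stated conditions are hypotheses, and any pair $(e(c); e(a))$ meeting them yields $d_1 d_2 d_3 < c$ immediately, with infinitude coming from the cyclic structure you already set up in your second paragraph.
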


\begin{proof} [Proof]
	As noted above, an $abc$ hit implies that $d_1 d_2 d_3 < c$. The  congruence relation in Proposition \ref{prop9} corresponds to an $abc$ equation of the form $q^{e(c)} - f( d_1) = y\mathcal{K}$, where $d_1$ and $y$ are specified from the base hit for $F(d_1, d_2) = c$, depending on the choice of $(d_1, d_2)$ from $D \in \mathcal{D}(c)$. Any pairs $\big(e(c); e(a)\big)$ satisfying $r_{e(c)}-r_{e(a)} \equiv 0 \pmod{y}$ is a solution to the $abc$ equation, provided that $f(d_1) < q^{e(c)}$ and $d_1 d_2 < q^{e(c)-1}$. With $d_2 = wk$ where $w>1$ and $k \geq 1$ are both squarefree and $d_3 = q$, it follows that any solution satisfying the specified conditions is always a hit since $w \leq d_2$ and, hence, $d_1 d_3 w \leq d_1 d_2 d_3 < c$. 
\end{proof}

To illustrate Proposition \ref{prop9}, suppose $c=2^{e_2}$ with the base hit $(5, 3^3, 2^5)$, such that $d_1=5$ and $y=3^3$ lead to $2^{e_2} - 5^{e_5} \equiv 0 \pmod{3^3}$. With $t_2 = t_5 = 18$ for $T_{27}(2)$ and $T_{27}(5)$, respectively, we find infinitely many hits from combinations of $(e_2 ; e_5)$ that satisfy the specified congruence, i.e all solutions $(e_2 ; e_5)$ to $r_{e(c)}-r_{e(a)} \equiv 0 \pmod{3^3}$ meeting the conditions that $2^{e_2} > 5^{e_5}$ and $d_1 d_2 < 2^{e_2-1}$. In fact, for any fixed $\mu_2$ and $\mu_5$, such solutions correspond to $(e_2 ; e_5) = \big(\mu_2 + t_{2}(k_2-1); \mu_{5} + t_{5}(k_5-1)\big)$, provided that both conditions are satisfied, where $\mu_2$ and $\mu_5$ are powers of $2$ and $5$ satisfying the congruence in the first cycles of $(r_{e(c)}, r_{e(a)})$ and $k_2$ and $k_5$ denote the numbers of cycles for $T_{27}(2)$ and $T_{27}(5)$, respectively. The hits arising from $E(2, 5) = b$, with $(5, 3^3, 2^5)$ as the base hit, include
\[
\begin {aligned}
	& (5^3, 3^4 \cdot 13 \cdot 31, 2^{15}), (5^8, 3^{5} \cdot 11 \cdot 1423, 2^{22}), \\
	& (5, 3^{6} \cdot 37 \cdot 311, 2^{23}), (5^6, 3^4 \cdot 7 \cdot 13 \cdot 31 \cdot 37 \cdot 127, 2^{30}).
\end{aligned}
\]
Despite conforming to the same congruence relation, note that the triples
\[
	(5^2, 3^3 \cdot 37, 2^{10}), (5^{4}, 3^3 \cdot 37 \cdot 1049, 2^{20}), (5^{8}, 3^3 \cdot 37 \cdot 1049 \cdot 1049201, 2^{40})
\]
are not hits as they fail the condition that $d_1 d_2 < Q(c)$. 

There are also infinitely many hits for $c = 2^{e_2}$ of the form $E(2, 3)=b$. With the base hit $(3, 5^3, 2^7)$, we have that $d_1=3$ and $y=5^2$ such that $t_2 = t_3 = 20$ for $T_{25}(2)$ and $T_{25}(3)$, from which we obtain infinitely many hits, such as
\[
\begin {aligned}
	& (3^4, 5^2 \cdot 7, 2^8), (3^2, 5^3 \cdot 131, 2^{14}), (3^3, 5^3 \cdot 19 \cdot 883, 2^{21}), \\
	& (3^4, 5^3 \cdot 13^2 \cdot 97 \cdot 131, 2^{28}), (3^5, 5^4 \cdot 54975581, 2^{35}).
\end{aligned}
\]
For comparison, let us also consider $c=3^{e_3}$, with $(1, 2^3, 3^2)$ as the base hit. With $t_1=1$ and $t_3=2$ for $y=4$, we find infinitely many hits, including
\[
\begin{aligned}
	& (1, 2^4 \cdot 5, 3^4), (1, 2^3 \cdot 7 \cdot 13, 3^6), (1, 2^5 \cdot 5 \cdot 41, 3^8), \\
	& (1, 2^3 \cdot 11^2 \cdot 61, 3^{10}), (1, 2^4 \cdot 5 \cdot 7 \cdot 13 \cdot 73, 3^{12}).
\end{aligned}
\]
Notice the lack of hits of the form $1 + 4\mathcal{K} = 3^{e_3}$ if $e_3$ is odd, thus suggesting that any hits in such a class have a different exponential form, e.g. $1+121\mathcal{K} = 3^{e_3}$ for $e_3=5(2k_3-1)$ which yields the first three hits 
\[
	(1, 2 \cdot 11^2, 3^{5}), (1, 2 \cdot 11^2 \cdot 13 \cdot 4561, 3^{15}), (1, 2 \cdot 11^2 \cdot 8951 \cdot 391151, 3^{25}).
\]
This clarifies why $3^2$ and $3^4$ are exceptional, but $3^3$ is not. In addition to $F(1, 10)=c$, we also find that $E(3,2)=b$ yields infinitely many hits for $c \in F_3$ by recursion from the base hit $(2^5, 7^2, 3^4)$. As $y$ is at least a square, let $y=7^2$ for which $t_2 = 21$ and $t_3 = 42$ and all hits are of the form $2^{e_2} + 49\mathcal{K} = 3^{e_3}$. The associated congruence has solutions $(e_3; e_2)$ in the form $e_3=\mu_3+42(k_3-1)$ and $e_2=\mu_2 + 21(k_2 - 1)$, where $\mu_p$ and $k_p$ are as above. In addition to the base hit, other hits resulting from $E(3,2)=b$ include
\[
\begin{aligned}
	& (2^2, 5 \cdot 7^2 \cdot 241, 3^{10}), (2^{10}, 7^2 \cdot 113, 3^8), (2^4, 5 \cdot 7^2 \cdot 241 \cdot 59053, 3^{20}), \\
	& (2^{20}, 5 \cdot 7^2 \cdot 37 \cdot 41 \cdot 113, 3^{16}), (2^{12}, 7^2 \cdot 13 \cdot 23 \cdot 31 \cdot 853, 3^{18}).
\end{aligned}
\]

As for $K$, it is also possible that $\mathcal{K}$ includes a squareful factor in the solutions to $E(d_3, d_1) = b$, such as $(3^4, 5^3 \cdot 13^2 \cdot 97 \cdot 131, 2^{28})$ for $E(2,3) = b$ and $(1, 2^3 \cdot 11^2 \cdot 61, 3^{10})$ for $E(3,1) = b$ above. Hence, while our approach can generate infinitely many hits with $F(d_1, d_2)=c$ and $E(d_3, d_2)=b$, it requires ascertaining if $K$ and $\mathcal{K}$ are indeed squarefree in the resulting $c$ and $b$, respectively.

The above narrative establishes the infinitudes of integers with or without hits, but remains insufficient to validate the claim of the $abc$ conjecture. Our next task is to provide further insights into underlying integer patterns that are fundamental to a better understanding of the inherent complexity of the conjecture.

\section{Anatomy of a conjecture}
Although there exist infinitely many hits, the $abc$ conjecture asserts that $d_1 d_2 d_3$ cannot be far smaller than $c$, as implied by the constraint imposed by the exponent $1 + \varepsilon$ in \eqref{eq: 1.7}. One of the key measures in characterizing $abc$ hits is termed \textit{quality}, denoted by $\lambda(a, b, c)$ and expressed as
\begin{equation} \label{eq: 4.1}
 	\lambda(a, b, c) = \dfrac{\log c}{\log (d_ 1 d_2 d_3)}.
\end{equation}
The $abc$ conjecture equivalently states that only finitely many triples satisfy
\begin{equation} \label{eq: 4.2}
	\lambda(a, b, c) > 1 + \varepsilon, \quad \forall \; \varepsilon > 0.
\end{equation}
Among the known hits so far, $(2, 3^{10} \cdot 109, 23^{5})$ has the highest quality with $\lambda(a, b, c) = 1.62991$, which was obtained by E. Reyssat in 1987, reportedly by brute force \cite{LZ}, although such a hit could be established by continued fraction approximation \cite{MM}. It is unknown if there are hits satisfying $\lambda(a, b, c) \geq 2$, for which a simpler proof of the \textit{Fermat's last theorem} would immediately follow. It is believed \cite{JB}, \cite{BB} that
\begin{equation} \label{eq: 4.3}
	\limsup_{c \to \infty} \lambda(a, b, c) = 1.
\end{equation}
If proven true, then \eqref{eq: 4.3} consequently affirms the claim of the $abc$ conjecture.

A proof of the $abc$ conjecture requires ascertaining that despite the infinitude of hits, only finitely many integers satisy \eqref{eq: 1.7} for $\varepsilon > 0$. For heuristic argument, let
\[
	\lambda(a, b, c) = 1 + \varepsilon (a, b, c), \qquad \theta (c)= \dfrac{Q(c)}{D}, \qquad D = d_1 d_2.
\]
It can be shown from \eqref{eq: 4.1} that
\begin{equation} \label{eq: 4.4}
	\varepsilon(a, b, c) = \dfrac{\log \theta (c)}{\log D + \log d_3},
\end{equation}
where $\theta (c) > 1$ expresses the magnitude of $Q(c)$ relative to $D$ for which $F(d_1, d_2) = c$ generates infinitely many hits, as demonstrated above. Such infinitude of hits with the same $(d_1, d_2)$ belong entirely to a specific $F_{sq}$. Suppose $(a, b, c)$ and $(a', b', c')$ are distinct hits with common $(d_1, d_2)$ in the same $F_{sq}$, where $c<c'$ and $a \leq a'$, then with $c = qQ(c)K$ and $c'=q'Q(c')K'$ it is almost surely true that
\begin{equation} \label{eq: 4.5}
	\dfrac{b'}{b} > \dfrac{Q(c')}{Q(c)} \; \Longrightarrow \; K < K' \; \Longrightarrow \;  d_ 3 < d'_3,
\end{equation}
which implies that $d_3=qK$ and $d'_3 =q'K'$, where $q \leq q'$. Propositions \ref{prop3}-\ref{prop5} provide the basis for establishing \eqref{eq: 4.5}, which tends to yield $\varepsilon(a', b', c') < \varepsilon(a, b, c)$ infinitely often as implied by the term ``almost surely". Since $(d_1, d_2)$ are fixed and $d'_3 > d_3$ for almost all $c' > c$ in $F_{sq}$, we thus expect for any fixed $\varepsilon > 0$ that
\begin{equation} \label{eq: 4.6}
	\varepsilon (a, b, c) \leq \varepsilon, \quad \forall \; c \in F_{sq}, \; s \in \mathbb{S},
\end{equation}
infinitely often. The same claim holds for hits from \eqref{eq: 3.18}, whence we have that
\begin{equation} \label{eq: 4.7}
	\dfrac{c'}{c} > \dfrac{Q(b')}{Q(b)} \; \Longrightarrow \; \mathcal{K} < \mathcal{K}' \; \Longrightarrow \;  d_ 2 < d'_2,
\end{equation}
where $b=wQ(b)\mathcal{K}$ and $b' = w'Q(b')\mathcal{K}'$, with $w \leq w'$. It also follows that $b < b'$, $a \leq a'$, $d_2=s\mathcal{K}$ and $d'_2=w'\mathcal{K}'$. Observe that $d'_3=d_3$ for all $c' > c$ but $d'_2 > d_2$ for almost all $b' > b$, such that $\varepsilon(a', b', c') < \varepsilon(a, b, c)$ infinitely often and \eqref{eq: 4.6} holds accordingly, with only finitely many exceptions for any fixed $\varepsilon > 0$. The same claim applies to any infinite set of hits in $F_{sq}$ for $q \in \mathbb{S}_{>1}$ and $s \in \mathbb{S}$, where $\gcd(s, q)=1$. 

Recall that for every base hit with $c \in F_q$, infinitely many hits for $c'>c$, with $c' \in F_{sq}$ for $s \in \mathbb{S}$, exist as solutions to $F(d_1, d_2)=c$ and $E(d_3, d_1) = f(d_2)$ (also to $E(d_3, d_2) = f(d_1)$ as shown below), where such hits are easily determined so long as $t_{p_i}, t_{p_j}$ and $t_{p_k}$ are known for all $p_i | d_1$, $p_j | d_2$ and $p_k | d_3$, respectively. It needs more rigor to establish \eqref{eq: 4.5} and \eqref{eq: 4.7}, which both imply the quotient structures arising from lifted solutions to congruences modulo prime powers. It suffices to show that $K'$ and $\mathcal{K'}$ are mostly increasing squarefree factors in infinitely many $c' \in F_{sq}$. This thus provides a rough sketch of potential proof strategy for the $abc$ conjecture.

For further insight into the complexity of the $abc$ conjecture, note that $(d_1, d_2)$ denotes the \textit{ordered pair} as inputs to $F(d_1, d_2) = f(d_1)+f(d_2)=c$, where $a=f(d_1)$ and $b=f(d_2)$ must satisfy the condition that $1 \leq a < b$. Suppose $D=p_i p_j$, where $p_i \neq p_j$, it is clear that $(p_i, p_j) \neq (p_j, p_i)$ and $F(p_i, p_j) \neq F(p_j, p_i)$. The additive commutativity of $abc$ equations is preserved, with $f(p_i) + f(p_j) = f(p_j) + f(p_1)$, provided that $a = f(p_i) < b = f(p_j)$.

Let $\mathcal{Q}_0$ be the set of equivalent classes of $c$ that are absolutely no-hit numbers by Proposition \ref{prop2}, i.e. put
\begin{equation} \label{eq: 4.8}
	\mathcal{Q}_0 = \big\{\overline{Q}_i\big\} = \big\{\overline{1}, \overline{2}, \overline{4}, \overline{6}, \overline{8}, \overline{10}, \overline{12} \big\}.
\end{equation}
A \textit{brute force} search for hits requires checking at most $N(c)$ pairs of $(a, b)$ for any $c \not \in \mathcal{Q}_0$. Our approach initially sets $F(d_1, d_2)=c$ or $E(d_3, d_1)=b$ for the base hit with a given $D \in \mathcal{D}(c)$, from which infinitely many solutions will follow naturally. For $\mathcal{D}(c)$ as defined in \eqref{eq: 1.26}, its cardinality is denoted by
\begin{equation} \label{eq: 4.9}
	\abs{\mathcal{D}(c)} = \# \{(d_1, d_2) \in \mathbb{S} \times \mathbb{S}_{>1} : d_1 d_2 = D \in \mathcal{D}(c) \}.
\end{equation}
Note that $\abs{\mathcal{D}(c)}$ may be constrained by Proposition \ref{prop1} for which $\abs{\mathcal{D}(c)}=0$ or by $G(c)$ in \eqref{eq: 1.19} for which $0 \leq \abs{\mathcal{D}(c)} \leq \abs{\mathcal{S}'(Q(c))}$ where $\mathcal{S}'(Q(c))$ is as defined in \eqref{eq: 1.18}. Let $M(c)$ denote the \textit{maximal} number of $abc$ equations, regardless if solvable or not, that can be formed from all $D \in \mathcal{D}(c)$. One expects to find that
\[
	M(c) \gg \abs{\mathcal{D}(c)},
\]
where $M(c)$ accounts for the multiplicity of equations arising from the same $D$. For example, the $abc$ equations for $D=p_i p_j$ can take the forms
\[
	F(p_i, p_j) = c, \quad F(p_j, p_i) = c, \quad F(1, p_i p_j) = c.
\]
For $D \in \mathcal{D}(c)$, let $\phi_{\omega}(D)$ be the number of all $D \in D_{\omega}$, where $D_{\omega}$ is as defined in \eqref{eq: 1.1}. Also let $M_{\omega}(c)$ denote the number of putative equations that can be formed with all the prime factors in each $D \in D_{\omega}$. Observe that
\begin{equation} \label{eq: 4.10}
	M_{0}(c) = 0, \quad M_{1}(c) = 1, \quad M_{2}(c) = 3, \quad M_{3}(c) = 7, \quad M_{4}(c) = 15, \quad \dots,
\end{equation}
where $M_{0}(c) = 0$ since no $abc$ equation can be formed when $\mathcal{D}(c) = \emptyset$, which is the case when $Q(c) = 1$. In fact, it is not hard to see that
\begin{equation} \label{eq: 4.11}
	M_{\omega}(c) = \sum_{r=0}^{\omega-1} C(\omega, r),
\end{equation}
where $C(n, k)$ is the binomial coefficient, with $C(n, k) =0$ for $k>n$. We have that
\begin{equation} \label{eq: 4.12}
	M(c) = \sum_{\omega = 1}^{\gamma} M_{\omega}(c)  \phi_{\omega}(D) = \sum_{\omega = 1}^{\gamma}\sum_{r=0}^{\omega-1} C(\omega, r) \phi_{\omega}(D),
\end{equation}
where
\[
	\gamma = \max\{\omega(D) : D \in \mathcal{D}(c)\}.
\]
This suggests the increasing complexity of finding $abc$ hits when $Q(c)$ gets larger in $c \in \mathbb{N}_{>2}$. We now state some fundamental facts to reinforce our argument.

\begin{lemma} \label{lem1}
	For $c \in \mathbb{N}_{>2}$ and $s \in \mathbb{S}$, if $c \in \overline{Q}$ and $\gcd(s, c)=1$, then $sc \in \overline{Q}$.
\end{lemma}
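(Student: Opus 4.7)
The plan is to unwind the definitions: the equivalence class $\overline{Q}$ is determined by the value of $Q(\cdot)$, and $Q$ is defined by the factorization $c = Q(c)R(c)$ with $R$ the radical. So the whole claim reduces to showing that multiplying $c$ by a squarefree integer $s$ coprime to $c$ does not change $Q$.

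First I would compute $R(sc)$. Since $s \in \mathbb{S}$ is squarefree and $\gcd(s,c)=1$, the prime factorizations of $s$ and $c$ are disjoint, so the multiset of primes dividing $sc$ is the disjoint union of those dividing $s$ and those dividing $c$; by multiplicativity of the radical across coprime factors, $R(sc) = R(s)R(c) = s\,R(c)$, the last equality holding because $s$ is already squarefree.

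Next I would use \eqref{eq: 1.8} to write $sc = Q(sc)R(sc)$ and also $sc = s \cdot Q(c)R(c)$. Substituting the expression for $R(sc)$ from the previous step gives
\[
Q(sc)\cdot s\,R(c) \;=\; s\cdot Q(c)\,R(c),
\]
and cancelling the nonzero factor $s\,R(c)$ yields $Q(sc)=Q(c)$. By the definition \eqref{eq: 3.2} of the equivalence class, $sc$ therefore lies in the same $\overline{Q}$ as $c$.

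There is no real obstacle here; the only thing to be careful about is justifying $R(sc)=s\,R(c)$, which relies specifically on the two hypotheses $s\in\mathbb{S}$ (so $s$ contributes no repeated primes of its own) and $\gcd(s,c)=1$ (so $s$ introduces no repeated primes already present in $c$). Both hypotheses are needed, and together they make the radical multiplicative on the pair $(s,c)$.
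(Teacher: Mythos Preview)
Your proof is correct and follows essentially the same approach as the paper: the paper writes $c = d_3 Q(c)$ (with $d_3 = R(c)$), observes that $sc = s d_3 Q(c)$ with $R(sc) = s d_3$, and concludes $Q(sc) = Q(c)$. You simply make the justification of $R(sc) = s\,R(c)$ more explicit than the paper does.
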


\begin{proof}
	Write $c=d_3Q(c)$, such that $c' = s d_3 Q(c)$ where $d'_3 = sd_3$ and the claim of the lemma thus immediately follows.
\end{proof}

\begin{lemma} \label{lem2}
	 For $c \in \overline{Q}$ and $s \in \mathbb{S}$, one has $s \overline{Q} \in F_{q}$ if $\gcd(s, c) = 1$ and $s \overline{Q} \in F_{sq}$ if $\gcd(s, c) > 1$.
\end{lemma}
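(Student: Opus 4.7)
The plan is to pin down the image $s\overline{Q}$ by computing $Q(sc)$ directly in terms of $Q(c)$ and $\gcd(s,c)$, and then reading off which of the two families $F_q$ or $F_{sq}$ the result sits in. The starting observation is that because $s \in \mathbb{S}$ is squarefree, multiplication by $s$ raises the $p$-adic valuation of $c$ by exactly $1$ for every prime $p \mid s$ and leaves every other prime untouched. Partitioning the primes of $s$ into those dividing $c$ (whose product is $g = \gcd(s,c)$, again squarefree since $s$ is) and those coprime to $c$, a routine accounting of exponents in \eqref{eq: 1.8} gives the identity
\[
        Q(sc) = Q(c)\,\gcd(s,c),
\]
since primes of $s$ coprime to $c$ only append to $R(c)$, while primes of $s$ already in $c$ contribute an additional factor to the squareful part.

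In the coprime case $\gcd(s,c)=1$, the identity yields $Q(sc)=Q(c)$, and an appeal to Lemma \ref{lem1} places $sc$ in the same equivalence class $\overline{Q}$ as $c$. Since $R(Q(sc))=R(Q(c))=q$, the class $\overline{Q}$ lies in $F_q$ by the definition \eqref{eq: 3.12}, establishing $s\overline{Q}\in F_q$. In the non-coprime case $\gcd(s,c)=g>1$, I would invoke the implicit standing hypothesis $\gcd(s,q)=1$ inherited from the $F'_q$ setup preceding Proposition \ref{prop4}. That hypothesis makes the primes of $g$ disjoint from the primes of $q$, so $R(Q(sc))=R(Q(c))\cdot R(g)=q\cdot R(g)$, a divisor of $qs$. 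The triple $(a,b,c)$-wise quotient structure of Proposition \ref{prop4} then shows that $sc\in s'F'_q$ for a unique squarefree $s'\mid s$ with $\gcd(s',q)=1$, so $sc$ belongs to the aggregated family
\[
        F_{sq} \;=\; \bigcup_{\substack{s'\in\mathbb{S}\\(s',q)=1}} s'F'_q,
\]
yielding $s\overline{Q}\in F_{sq}$ as claimed.

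The main obstacle, and the reason the statement is phrased with the umbrella notation $F_{sq}$ rather than a single class, is that in the non-coprime case $g=\gcd(s,c)$ genuinely depends on the representative $c\in\overline{Q}$: different elements of $\overline{Q}$ have different radicals $d_3$ and so can share different subsets of the primes of $s$. Consequently $s\overline{Q}$ is not a single equivalence class but a union of several, each of which must be shown to land inside some $s'F'_q$. This is exactly the book-keeping issue Proposition \ref{prop4} was designed to absorb, and the verification reduces to observing that every prime factor of $g$ is squarefree-coprime to $q$ and therefore produces a legitimate $s'$ index in the union above. Once that is confirmed, the two cases combine to give the lemma.
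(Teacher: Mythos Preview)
Your argument is correct and in fact more careful than the paper's own proof. The paper, in treating the non-coprime case, jumps from $\gcd(s,c)>1$ directly to the assertion that $s\mid c$ (equivalently $s\mid d_3$), and then computes $Q(c')=sQ(c)$ with $\gcd(s,Q(c))=1$ to obtain $q'=sq$. That covers only the extreme sub-case $\gcd(s,c)=s$. Your identity $Q(sc)=Q(c)\cdot\gcd(s,c)$, derived from the valuation bookkeeping, handles the general situation and makes explicit that the resulting radical is $R(Q(sc))=g\,q$ with $g=\gcd(s,c)$, which may be a proper divisor of $sq$; thus $sc$ lands in $gF'_q$ for some squarefree $g\mid s$ rather than necessarily in $sF'_q$ itself. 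Your appeal to the umbrella definition $F_{sq}=\bigcup_{(s',q)=1}s'F'_q$ to absorb this dependence on the representative $c\in\overline{Q}$ is precisely what that notation was introduced for, and it closes the gap the paper's proof leaves open. The one extra hypothesis you invoke, $\gcd(s,q)=1$, is indeed the standing assumption built into the $F_{sq}$ framework following Proposition~\ref{prop4}, so relying on it is legitimate.
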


\begin{proof}
	Lemma \ref{lem1} easily implies the case when $\gcd(s, c) = 1$. If $\gcd(s, c) > 1$, then $s | c$ for any $c \in \overline{Q}$. Since $s$ is squarefree, it follows that $s | d_3$. Let $\overline{Q'} = s\overline{Q}$ such that for any $c' \in \overline{Q'}$, we have that $c' = d_3 Q(c')$ where $Q(c') = sQ(c)$ with $\gcd(s, Q(c)) = 1$. Write $R(Q(c'))=q'$ and $R(Q(c))=q$. Then the latter claim holds since $q' = sq$, where $\gcd(s, q) = 1$.
\end{proof}

For some $D \in \mathcal{D}(c)$, we have shown that infinitely many hits result from $F(d_1, d_2)=c$ for $c \in F_{sq}$ or $E(d_3, d_2) = b$ for $c \in F_q$. Such hits are generated by an algorithm involving a recursion from a base hit with $F(d_1, d_2) \equiv 0 \pmod{z}$ or $E(d_3, d_1) \equiv 0 \pmod{y}$, respectively. For any base hit with $c_1 \in F_q$, define
\begin{equation} \label{eq: 4.13}
	\Phi(c_1, d_1, d_2) = \{c_i\}_{i=1}^{\infty} = \{c_1, c_2, c_3, \dots\}, \quad c_i < c_{i+1}, \quad \forall \; i \in \mathbb{N},
\end{equation}
as the sequence of hits generated by $F(d_1, d_2) = c$, whereas
\begin{equation} \label{eq: 4.14}
	\Psi(c_1, d_1, d_3) = \{c_j\}_{j=1}^{\infty} = \{c_1, c_2, c_3, \dots\}, \quad c_j < c_{j+1}, \quad \forall \; j \in \mathbb{N},
\end{equation}
by $E(d_3, d_1) = b$. Clearly, $\Phi(c_1, d_1, d_2)$ and $\Psi(c_1, d_1, d_3)$ are \textit{monotone increasing} in $F_{sq}$ and $F_q$, respectively. Note that $d_3$ varies but $d_1$ and $d_2$ are fixed in $\Phi(c_1, d_1, d_2)$, while $d_2$ varies but $d_1$ and $d_3$ are fixed in $\Psi(c_1, d_1, d_3)$. The $abc$ conjecture implies that the variations in $d_3$ and $d_2$ are not necessarily monotone in $\Phi(c_1, d_1, d_2)$ and $\Psi(c_1, d_1, d_3)$, respectively.  We record the fact that

\begin{lemma} \label{lem3}
	 For any base hit with $c_1 \in F_q$, there always exist $\Psi(c_1, d_1, d_2) \subset F_q$ and $\Phi(c_1, d_1, d_3) \subset F_{sq}$ for $s \geq 1$.
\end{lemma}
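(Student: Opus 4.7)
The plan is to prove each of the two containments by reading off what is preserved along the recursion that generates the corresponding sequence and then invoking Propositions \ref{prop3}--\ref{prop5}.

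First I would handle $\Psi$, interpreting it according to \eqref{eq: 4.14} as the sequence of hits obtained from $E(d_3, d_1) = b$ with $d_3$ and $d_1$ fixed. Because $d_3 = R(c)$ is held constant along the recursion, each $c_j$ in the sequence has exactly the same prime support as $c_1$, so the primes dividing $Q(c_j) = c_j/d_3$ are contained in the set of primes of $d_3$, and $R(Q(c_j))$ is a constant squarefree divisor of $d_3$. Since $R(Q(c_1)) = q$ by hypothesis, the same value $q$ is attained by every $c_j$, giving the first containment. In the prototypical prime-power setting of Proposition \ref{prop9} this reduces to the trivial observation that each $c_j$ is itself a power of $q$.

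Next I would handle $\Phi$, matching \eqref{eq: 4.13}, i.e.\ the sequence generated by $F(d_1, d_2) = c$ with $(d_1, d_2)$ fixed. Writing $c_i = zK_i$ with $z = qQ(c_1)$ as in the discussion before Proposition \ref{prop5}, the base hit has $K_1$ squarefree, so $c_1 \in F_q$. For $i > 1$, however, the lift supplied by Proposition \ref{prop3} may produce a $K_i$ carrying a squareful factor; crucially, any such factor is coprime to $z$, since the recursion only updates exponents of primes dividing $d_1 d_2$ modulo powers of $q$ and leaves the $q$-content of $c$ governed by $Q(c_1)$. Hence the new squareful contribution comes from primes outside the support of $q$, giving $R(Q(c_i)) = sq$ for some squarefree $s$ with $\gcd(s,q)=1$ (and $s = 1$ precisely when $K_i$ remains squarefree). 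By Proposition \ref{prop4} each such $c_i$ lies in $F_{sq}$, and collecting over $i$ yields $\Phi \subset F_{sq}$ in the union-over-$s \in \mathbb{S}$ shorthand fixed after Proposition \ref{prop4}.

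The step that requires the most care is the claim that the squareful part of $K_i$ is necessarily coprime to $z$, equivalently that the recursion cannot stealthily increase the $q$-adic content of $c_i$ beyond what is already recorded in $z$. This is exactly the content of Proposition \ref{prop3}: each lift raises the exponent of a prime $p$ dividing $d_1 d_2$ by the period $t$ of $T_{q^\nu}(p)$ and transports the congruence into the next modulus $q^{\nu+1}$, so the only new prime squares that can appear in $K_i$ come from primes outside the support of $z$. This is precisely the regime captured by $F_{sq} = \bigcup_{(s,q)=1} sF'_q$, and the lemma follows.
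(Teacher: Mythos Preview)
Your approach is the same as the paper's, which literally just writes ``Lemma \ref{lem3} generalizes Proposition \ref{prop5} and both claims are well exemplified here and there''; you have merely unpacked what that citation is supposed to mean via Propositions \ref{prop3}--\ref{prop5} and the $F_{sq}$ convention.

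There is, however, one concrete misstep in your $\Phi$ argument. You assert that the recursion ``leaves the $q$-content of $c$ governed by $Q(c_1)$'' and hence that any squareful factor of $K_i$ is coprime to $z$. This is false: the paper's own first example after Proposition \ref{prop6} already shows the $q$-power rising, e.g.\ $(1,2^3,3^2)\to(1,2^9,3^3\cdot 19)\to\cdots\to(1,2^{27},3^4\cdot 19\cdot 87211)$, so with $z=9$ the quotient $K_5=3^2\cdot 19\cdot 87211$ has a square factor $3^2$ that is \emph{not} coprime to $z$. Your conclusion $R(Q(c_i))=sq$ with $\gcd(s,q)=1$ is nonetheless correct, but for a different reason: extra powers of $q$ in $c_i$ simply keep the $q$-factor in $R(Q(c_i))$ equal to $q$, while any other squareful primes contribute to $s$; either way $c_i\in F_{sq}$. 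So the fix is to drop the coprimality claim and argue directly on $R(Q(c_i))$. A smaller caveat applies to your $\Psi$ paragraph: ``$R(Q(c_j))$ is a constant squarefree divisor of $d_3$'' is only automatic when $d_3$ is prime (the setting of Proposition \ref{prop9}); for composite $d_3$ the divisor can vary, and you would need to invoke the specific shape of the $E(d_3,d_1)$ recursion rather than just the constancy of the prime support.
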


\begin{proof}
	Lemma \ref{lem3} generalizes Proposition \ref{prop5} and both claims are well exemplified here and there.
\end{proof}

To illustrate the case for $\Phi(c_1, d_1, d_2) \subset F_{sq}$, one can verify that for $c \in \overline{11}$, the $abc$ equation $F(1, 6) = 121K$ for $D=6$ generates infinitely many hits for $c \in F_{sq}$ of the form $1 + 2^{e_2}3^{e_3} = 121K$, where $e_2 = \mu_2 + k_2(t_2-1)$ and $e_3 = \mu_3 + k_3(t_3-1)$ with $\mu_2=55, t_2=110$ and $\mu_3=5, t_3=5$ for $z=11^2$, from which infinitely many hits exist for $c \in F_{sq}$ with $s=5$ and $q=11$ when $e_3 =  5 + 20k'_3$ for any $e_2$ in its specified form. For example, $1+2^{55}3^{5+5(k_3-1)} = c$ yields infinitely many hits for $c = 11^{n}K$ with $n>1$ and $K$ squarefree when $k_3 \not \equiv 1 \pmod{4}$, but $c = 5^{m}11^{n}K'$ with $m, n >1$ and $K'$ squarefree when $k_3 \equiv 1 \pmod{4}$, where $k_3$ is the number of cycles in $T_{121}(3)$.

We can write every $abc$ equation $a+b=c$ in its \textit{standard form} as
\begin{equation} \label{eq: 4.15}
	d_1Q(a) + d_2Q(b) = d_3Q(c),
\end{equation}
where
\begin{equation} \label{eq: 4.16}
	Q(a) = \prod_{p_{i}^{e_i} | a} p_{i}^{e_{i}-1}, \qquad Q(b) = \prod_{p_{j}^{e_j} | b} p_{j}^{e_{j}-1}, \qquad Q(c) = \prod_{p_{k}^{e_k} | c} p_{k}^{e_{k}-1},
\end{equation}
with $\gcd\big(Q(a), Q(b), Q(c)\big)=1$ and $\gcd(d_1, d_2, d_3)=1$, e.g. the hits $(1, 2^5 \cdot 7, 3^2 \cdot 5^2)$, $(1, 3^5 \cdot 5, 2^6 \cdot 19)$ and $(3^3 \cdot 17, 5^5, 2^9 \cdot 7)$ can be written in standard forms as
\[
	1 + 14 \cdot 2^4 =  15 \cdot 15, \qquad 1 + 15 \cdot 3^4 = 38 \cdot 2^5, \qquad 51 \cdot 3^2 + 5 \cdot 5^4 = 14 \cdot 2^8,
\]
respectively. In standard form, one can easily check if the hit satisfies the necessary condition that $d_1 d_2 d_3 < c$.

For $abc$ equations in standard form, $\Phi(c_1, d_1, d_2)$ and $\Psi(c_1, d_1, d_3)$ are the solution sets when $(d_1, d_2)$ or $(d_1, d_3)$ are fixed, respectively.. If $(a, b, c)$ is a base hit, where $d_2=w\mathcal{K}$ and $d_3=qK$, with $\mathcal{K}, K$ both squarefree, then there exists a hit $(a', b', c')$, where $c'>c$, $d'_2=w'\mathcal{K'}$ and $d'_3=q'K'$, with $\mathcal{K'}$ and $K'$ also both squarefree and $w, w'$ and $q, q'$ are not necessarily equal. For any $c' > c$, we say that $b$ or $c$ is \textit{lifted} if $Q(b')>Q(b)$ or $Q(c') > Q(c)$, respectively. Hence, we claim that

\begin{lemma} \label{lem4}
 	From a base hit with $c \in F_q$, lifting in $b$ or $c$ results in hits for $c' > c$, where $c' \in F_{sq}$ for $s \in \mathbb{S}$ and a fixed $q \in \mathbb{S}_{>1}$.
\end{lemma}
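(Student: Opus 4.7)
The plan is to derive Lemma \ref{lem4} by stitching together the two lifting mechanisms already in place. Proposition \ref{prop5} essentially handles the direction "lifting in $c$" via $F(d_1, d_2) = c$, while Proposition \ref{prop9} combined with the Hensel-type step of Proposition \ref{prop3} handles "lifting in $b$" via $E(d_3, d_1) = b$. Lemma \ref{lem2} will then pin down the exact equivalence class containing the lifted $c'$.

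First I would treat lifting in $c$. For a base hit $(a,b,c)$ with $c \in F_q$ and $(d_1, d_2)$ fixed, the defining congruence $r_{e(a)} + r_{e(b)} \equiv 0 \pmod{z}$ with $z = qQ(c)$ admits lifted exponent pairs $\bigl(e(a'), e(b')\bigr)$ by iterating Proposition \ref{prop3} on every prime dividing $d_1 d_2$. This yields $c' = a' + b' = zK'$ with $c' > c$ and $Q(c') > Q(c)$. Decomposing $K' = sK''$, where $s$ is the squarefree radical of $K'$ coprime to $q$ and $K''$ collects the remaining squareful part, Lemma \ref{lem2} places $c' \in F_{sq}$. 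Since $Q(c')$ has strictly grown, this realizes a lifting in $c$.

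Next I would handle lifting in $b$. For the same base hit, the equation $E(d_3, d_1) = b$ corresponds to $r_{e(c)} - r_{e(a)} \equiv 0 \pmod{y}$ in the notation of Proposition \ref{prop9}. Applying Proposition \ref{prop3} to the primes dividing $y$ produces a lifted pair $\bigl(e(c'), e(a')\bigr)$ still satisfying the congruence, and the resulting $b' = y\mathcal{K}'$ obeys $Q(b') > Q(b)$, which is exactly the condition defining lifting in $b$. Decomposing the accompanying $c' = d_3' Q(c')$ into the product of its radical and its squareful quotient, and splitting the latter again as $s$ times a squareful remainder coprime to $q$, Lemma \ref{lem2} again locates $c'$ in $F_{sq}$.

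The main obstacle will be to justify that the factor $s$ is genuinely squarefree at every step of the recursion, not merely at the first lift. This should follow from the Hensel-type structure of Proposition \ref{prop3}: at each lift the increment $qt$ to the residue contributes new prime content coprime to $q$, while any repeated prime occurrences in the quotient are absorbed either into $q$ (remaining inside $Q(c')$) or into further squareful growth captured by $K''$, never into $s$. The degenerate case $s = 1$ (so $c' \in F_q$) and the nontrivial case $s > 1$ both arise in practice, as the worked hits after Propositions \ref{prop6}, \ref{prop7}, \ref{prop8} and the $s=5$, $q=11$ example following Lemma \ref{lem3} already witness, which gives the necessary consistency of the stated conclusion "$F_{sq}$ for $s \in \mathbb{S}$ and a fixed $q \in \mathbb{S}_{>1}$."
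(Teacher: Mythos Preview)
Your proposal is workable but considerably more elaborate than what the paper actually does. The paper's argument is a three-line case analysis: invoke Proposition~\ref{prop3} (Hensel) for the lifting mechanism, reduce without loss of generality to the lift of a \emph{single} prime factor $p_j \mid b$ or $p_k \mid c$, and then read off the target class directly. Lifting a prime in $b$ leaves $d_3$ untouched, so $c' \in F_q$ outright; lifting a prime $p_k$ in $c$ gives $c' \in F_q$ when $p_k \mid q$ and $c' \in F_{sq}$ with $s = p_k$ when $p_k \mid K$ (since then $Q(c') = p_k Q(c)$). No appeal to Propositions~\ref{prop5} or~\ref{prop9} or to Lemma~\ref{lem2} is needed.

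By routing through Propositions~\ref{prop5} and~\ref{prop9} you are conflating two slightly different things: those propositions generate \emph{further solutions} to $F(d_1,d_2)=c$ and $E(d_3,d_1)=b$, whereas ``lifting'' in the lemma is, by the definition given just before it, the specific condition $Q(b')>Q(b)$ or $Q(c')>Q(c)$. The paper simply \emph{assumes} such a lift has occurred and tracks the resulting class; you instead try to produce the lift via the solution-generating machinery and then locate the class through Lemma~\ref{lem2}, which is valid but circuitous. Your final paragraph worrying about the squarefreeness of $s$ at every recursion step is a non-issue in the paper's framing: because the reduction is to a single lifted prime, $s$ is either $1$ or that prime $p_k$, hence squarefree automatically. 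What your longer route buys is an explicit link back to the two hit-generating equations $F$ and $E$, which foreshadows the $\Phi$/$\Psi$ decomposition that comes afterward; what the paper's route buys is brevity and a sharper statement (lifting in $b$ always lands in $F_q$, not merely in some $F_{sq}$).
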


\begin{proof}
	Proposition \ref{prop3} or the well-known Hensel's lemma allows for any such tendency for lifting. Without loss of generality, assume that lifting occurs on a single prime factor $p_j$ in $b$ or $p_k$ in $c$. The lifting of  $p_j | b$ always leads to $c' \in F_q$. In the case of $c$, we have that $c' \in F_q$ if the lifting occurs on $p_k | q$, whereas $c' \in F_{sq}$ if on $p_k | K$. The latter case yields $Q(c')=p_kQ(c)$, such that $s = p_k$ where $p_k | K$ and $p_k \nmid q$, and this completes the proof.
\end{proof}

So far, we have not accounted for hits arising from base hits with both $\mathcal{K}>1$ and $K>1$. While our approach cannot deal with simultaneous solutions, i.e. where $\mathcal{K'}$ and $K'$ are both known, in such cases, it applies well if $d_1$ and $d_2$ are fixed to solve for $c$ or $d_1$ and $d_3$ are fixed to solve for $b$ by similar recursion with $F(d_1, d_2)=c$ or $E(d_3, d_1)=b$ to yield $\Phi(c_1, d_1, d_2)$ and $\Psi(c_1, d_1, d_3)$, respectively. Observe that $\Phi(c_1, d_1, d_2)$ and $\Psi(c_1, d_1, d_3)$ are sparse sequences, with densities as
\begin{equation} \label{eq: 4.17}
\begin{aligned}
	d_{\Phi(c_1, d_1, d_2)} & = \dfrac{\abs{\Phi(c_1, d_1, d_2) \cap [1, 2, 3, \dots, N]}}{N}, \\ 
	d_{\Psi(c_1, d_1, d_3)} & = \dfrac{\abs{\Psi(c_1, d_1, d_3) \cap [1, 2, 3, \dots, N]}}{N},
\end{aligned}
\end{equation}
for $N \in \mathbb{N}$. We obviously find that
\begin{equation} \label{eq: 4.18}
	\lim_{N \to \infty} d_{\Phi(c_1, d_1, d_2)} = 0, \qquad \lim_{N \to \infty} d_{\Psi(c_1, d_1, d_3)} = 0.
\end{equation}
The integers in $\Phi(c_1, d_1, d_2)$ and $\Psi(c_1, d_1, d_3)$ are not necessarily unique due to the likely occurrence of super exceptions. In addition, some base hits or integers close to the base hits have relatively high qualities in $\Phi(c_1, d_1, d_2)$ and $\Psi(c_1, d_1, d_3)$, which is not surprising given the vanishing density of perfect squares in $\mathbb{N}$. In fact, the Reyssat hit $(2, 3^{10} \cdot 109, 23^{5})$, which has the highest quality among the hits known so far, is a base hit. An infinitude of hits with lesser qualities can thus be generated from the Reyssat hit. To show this, take $\mu_2=1$, $\mu_3=10$ and $t_2 = t_3=11 \cdot 23^3$ for modulus $z=23^4$ to obtain
\begin{equation} \label{eq: 4.19}
	2^{ 1+133837(k_2-1)} + 109 \cdot 3^{10+133837(k_3-1)} = 23^{4+e_k}K, \quad e_k \geq 0,
\end{equation}
which yields infinitely many hits for any $k_2, k_3 \in \mathbb{N}$, with $K \not \equiv 0 \pmod{23}$. Unless proven otherwise, $K$ is not necessarily squarefree in view of the potential lifting of any prime factor coprime to $z$ in $c$ by Lemma \ref{lem4}. On the other hand, with $t_2=t_{23}=18$ for modulus $y=3^3$, we also find an infinitude of hits with $E(d_3, d_1) = c-a = b$. A simplified $abc$ equation in the form
\begin{equation} \label{eq: 4.20}
	b = 23^{5n} - 2^n, \quad \forall \; n \in \mathbb{N}
\end{equation}
certainly, but not exclusively, generates infinitely many hits from the Reyssat hit. Notice that the squarefree factor $\mathcal{K}$ in $b$ drastically increases with every unit increase in $n$, thus constraining the growth in $\lambda(a, b, c)$ by virtue of \eqref{eq: 4.4}. With \eqref{eq: 4.4}, \eqref{eq: 4.5}, \eqref{eq: 4.7}, \eqref{eq: 4.13} and \eqref{eq: 4.14}, the $abc$ conjecture thus follows naturally from any rigorous proof showing that
\begin{equation} \label{eq: 4.21}
\begin{aligned}
	& K_{i+1} > K_{i}, \quad c_i, c_{i+1} \in \Phi(c_1, d_1, d_2), \\ 
	& \mathcal{K}_{j+1} > \mathcal{K}_{j}, \quad c_j, c_{j+1} \in \Psi(c_1, d_1, d_3),
\end{aligned}
\end{equation}
with only finitely many exceptions. This implies that $R(c_{i+1}) > R(c_i)$ and $R(b_{j+1}) > R(b_j)$ infinitely often for $i, j \in \mathbb{N}$ in $\Phi(c_1, d_1, d_2)$ and $\Psi(c_1, d_1, d_3)$, respectively, such that \eqref{eq: 4.21} tends to coincide with the conjectured limit in \eqref{eq: 4.3}. In line with Lemma \ref{lem4}, notice that $E(d_3, d_1)=b$ also results in the lifting of $b$, where $Q(b')>Q(b)$ for $c'>c$, to generate the hits in $\Psi(c_1, d_1, d_3)$, as exemplified above by $E(2,5)=b$ with $(5, 3^3, 2^5)$ as the base hit. Such a case demonstrates that $\mathcal{K}_{j+1} > \mathcal{K}_{j}$ for most $c_j, c_{j+1} \in \Psi(c_1, d_1, d_3)$ since the incremental effect of any lifting on  primes in $d_2$ is expected to be less than the growth of $c$ in the sequence, except for only finitely many exceptions. Note, however, that depending on the choice of modulus $y$ from a base hit, the resulting solutions to $E(d_3, d_1)=b$ may not entirely constitute hits, as shown for $E(2, 5)=27\mathcal{K}$ from the base hit $(5, 3^3, 2^5)$.

Every integer $N$ can be expressed in the form
\begin{equation} \label{eq: 4.22}
	N = sf(m), \qquad m,s \in \mathbb{S}, \qquad \gcd(m, s)=1,
\end{equation}
where $s$ is the squarefree part and $f(m)$ is the squareful part. If $N$ is squarefree, then $m$ is trivial $(m=1)$. Hence, every $abc$ equation can be written as
\begin{equation} \label{eq: 4.23}
	h_1 f(x_1) + h_2 f(x_2) = h_3 f(x_3),
\end{equation}
where $x_i$ and $h_i$ $(i = 1, 2, 3)$ are all squarefree and pairwise coprime, except when $a=h_1f(x_1)=1$ or at least two terms in $(a, b, c)$ are powerful numbers. It follows from \eqref{eq: 4.15} and \eqref{eq: 4.23} that 
\begin{equation} \label{eq: 4.24}
	Q(a) = \dfrac{f(x_1)}{x_1}, \qquad Q(b) = \dfrac{f(x_2)}{x_2}, \qquad Q(c) = \dfrac {f(x_3)}{x_3},
\end{equation}
such that
\begin{equation} \label{eq: 4.25}
	d_1 = h_1 x_1, \qquad d_2 = h_2 x_2, \qquad d_3 = h_3 x_3,
\end{equation}
where $x_2 = w$ and $x_3=q$ for $w$ and $q$ as above. Let $x$ be a squarefree composite, with prime decomposition as
\begin{equation} \label{eq: 4.26}
	x = p_1 p_2 \cdots p_k,
\end{equation}
where $p_i$ $(i=1, 2, \dots, k)$ denotes all $p_i | x$, with $\omega(x)=k$. Suppose that
\begin{equation} \label{eq: 4.27}
	f(x) = p_1^{e_1} p_2^{e_2} \cdots p_k^{e_k} = n, \qquad e(n) = (e_1, e_2, \dots, e_k),
\end{equation}
then for some modulus $z$, as specified above, we have that
\begin{equation} \label{eq: 4.28}
	r_{e(n)} \equiv \prod_{p_i | x} r_{e_i} \pmod{z}, \qquad r_{e_i} \in T_{z}(p_i),
\end{equation}
where $T_{z}(p_i)$ is as defined in \eqref{eq: 3.13}. For $h \in \mathbb{S}_{>1}$, we obtain
\begin{equation} \label{eq: 4.29}
	hn \equiv hr_{e(n)} \equiv r'_{e'(n)} \pmod{z}, \qquad e'(n) = \big(1, e(n)\big),
\end{equation}
where each prime dividing $h$ has exponent $1$. We also note some basic facts relevant to the hit-finding algorithm as outlined above.

\begin{lemma} \label{lem5}
 	For $D \in \mathcal{D}(c)$, if $H(c)>0$ for $D=s_1 s_2$ where $s_1, s_2 \in \mathbb{S}_{>1}$, then both $F(s_1, s_2) = c$ and $F(s_2, s_1)=c'$ generate infinitely many hits for $c, c' \in F_{sq}$, with $f(s_1) < f(s_2)$ and $f(s_2) < f(s_1)$, respectively.
\end{lemma}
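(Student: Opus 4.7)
The plan is to reduce Lemma \ref{lem5} to the mechanism already developed in Propositions \ref{prop3}--\ref{prop5}, exploiting the symmetry of the congruence $F(s_1,s_2) \equiv 0 \pmod{z}$ together with the independence of the cyclic sequences $T_{z}(p)$ across the primes dividing $s_1 s_2$.

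First I would fix the given base hit with $H(c)>0$ for $D=s_1 s_2$ and set $z = qQ(c)$ as in \eqref{eq: 3.3}. This hit supplies at least one solution pair $\bigl(e(a), e(b)\bigr)$ to
\[
	\prod_{p_i \mid s_1} r_{e_i} \;+\; \prod_{p_j \mid s_2} r_{e_j} \equiv 0 \pmod{z},
\]
with $f(s_1), f(s_2)$ in one of the two possible size orders. By Proposition \ref{prop3}, adding any multiple of the period $t_{p_i}$ of $T_z(p_i)$ to an individual exponent $e_i$ leaves $r_{e_i}$ unchanged, and the analogous statement holds for each prime dividing $s_2$. Hence the above congruence actually admits an infinite two-parameter family of solutions, obtained by independently cycling the $s_1$-exponents and the $s_2$-exponents from the base configuration.

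The decisive observation is that this congruence is blind to the ordering of its summands: it encodes both $F(s_1,s_2)=c$ and $F(s_2,s_1)=c'$ at once, with the distinction between them forced only by the convention $a<b$. To produce infinitely many hits with ordering $F(s_1,s_2)$ I would hold the $s_1$-exponents at their base values and cycle the $s_2$-exponents upward through arbitrarily many full periods; this sends $f(s_2) \to \infty$ while $f(s_1)$ remains bounded, so all sufficiently advanced members of the family satisfy $f(s_1)<f(s_2)$. For $F(s_2,s_1)=c'$ I would perform the mirror operation, freezing the $s_2$-exponents and cycling the $s_1$-exponents, so that eventually $f(s_2)<f(s_1)$. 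Proposition \ref{prop5} then deposits each resulting $c$ (resp. $c'$) in $F_{sq}$ for some $s \in \mathbb{S}$, as required.

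The point needing the most care is ensuring that the congruence-level solutions really are $abc$ hits and not merely solutions to $z \mid c$. Since $d_1 d_2 = s_1 s_2 = D$ is held fixed throughout the entire lifting, this reduces to checking that $D < Q(c')$ for every new $c'$ produced. The construction preserves $z \mid c'$, and Lemma \ref{lem4} shows that any lifting on prime factors coprime to $z$ can only increase $Q(c')$, so the inequality $D<Q(c')$ — and with it the hit condition $d_1 d_2 d_3 < c'$ — propagates along the whole family. The main obstacle is therefore bookkeeping rather than any new idea: one must confirm that after cycling the exponents, the squarefree parts of $a'$ and $b'$ remain exactly $s_1$ and $s_2$, which follows because each $e_i, e_j \geq 1$ is preserved and no new prime factors are introduced into $a'$ or $b'$.
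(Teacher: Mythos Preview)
Your proposal is correct and follows essentially the same route as the paper: both arguments rest on the periodicity of the sequences $T_{z}(p_i)$ and $T_{z}(p_j)$ for the primes dividing $s_1$ and $s_2$, cycling one block of exponents while holding the other fixed to force whichever inequality $f(s_1)\lessgtr f(s_2)$ is desired. The paper's own proof is a one-sentence appeal to this periodicity, whereas you additionally spell out the verification that $D<Q(c')$ persists and that the radicals of $a',b'$ stay equal to $s_1,s_2$; this extra bookkeeping is sound but not something the paper bothers to record.
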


\begin{proof}
	The claim is trivial given the periodicities of $T_{z}(p_i)$ and $T_{z}(p_j)$, where $z$ is as defined from $c$ of the base hit, for every prime $p_i | s_1$ and every prime $p_j | s_2$, from which one can establish infinite solutions $U_{z}(s_1; s_2)$ to $F(s_1, s_2) = c$ with $f(s_1)<f(s_2)$ and $U_{z}(s_2; s_1)$ to $F(s_2, s_1) = c'$ with $f(s_2)<f(s_1)$.
\end{proof}

Lemma \ref{lem5} is exemplified above by $F(2, 3) = 49K$ and $F(3, 2) = 49K$. The same lemma provides the basis for establishing the next two larger hits from the Reyssat hit as 
\[
	(109 \cdot 3^{10}, 2^{133838}, 23^{4}K') \quad \textnormal{and} \quad (2, 109 \cdot 3^{133847}, 23^{4}K), 
\]
where $K'<K$ are extremely large squarefree integers with $K', K \not \equiv 0 \pmod{23}$.

\begin{lemma} \label{lem6}
 	From the same base hit, both $E(d_3, d_1) = b$ and $E(d_3, d_2)=a$ generate infinitely many hits for $c \in F_{q}$, where $c=f(d_3)$ and $1<d_1=d_2$ between equations.
\end{lemma}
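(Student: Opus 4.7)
The plan is to apply the methodology of Proposition~\ref{prop9} symmetrically to the two ``inverse'' congruences derivable from a single base hit, exploiting the commutativity $a+b = b+a$. Given a base hit $(a_0, b_0, c_0)$ with radicals $d_1, d_2, d_3$ all greater than $1$, the first equation $E(d_3, d_1) = b$ is precisely the setup of Proposition~\ref{prop9}: choose a prime power $y \mid b_0$ with $y > d_1 d_3$, form the congruence $c - f(d_1) \equiv 0 \pmod{y}$, and invoke the cyclic structure of $T_y(p_k)$ for each $p_k \mid d_3$ together with the Hensel-type lifting of Proposition~\ref{prop3} to produce an infinite solution set $U_y(d_3; d_1)$.

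For $E(d_3, d_2) = a$, I would rerun the identical argument with the roles of $a$ and $b$ interchanged. Specifically, select a prime power $y' \mid a_0$ chosen analogously (at least a square, exceeding $d_2 d_3$, and supported on a prime dividing $R(Q(a_0))$), form $c - f(d_2) \equiv 0 \pmod{y'}$, and solve the associated modular identity $r_{e(c)} - r_{e(b)} \equiv 0 \pmod{y'}$. The periodicity of $T_{y'}(p)$ for every prime $p \mid d_3 d_2$, combined with Proposition~\ref{prop3}, yields infinitely many exponent tuples $(e(c); e(b))$ satisfying the congruence. Each such solution produces a triple $(a', b', c')$ with $R(c') = d_3$ (only exponents on primes already in $d_3$ vary) and $R(b') = d_2$ (fixed), so the triple lies in $F_q$ with $q = R(Q(c_0))$, exactly as in the first case.

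The delicate step is verifying the hit condition $d_1' d_2 d_3 < c'$ in the second equation, since now $d_1' = R(a')$ is the \emph{free} radical. In Proposition~\ref{prop9} both of $d_1, d_2$ were fixed, making the bound automatic; here $a' = c' - f(d_2)$ could a priori be nearly squarefree. The required control comes from the construction: because $y' \mid a'$ by design, one has $a' = y'\mathcal{K}'$ and hence $d_1' \leq R(y')\,R(\mathcal{K}') \ll c'$ provided $\mathcal{K}'$ remains squarefree as the lifting proceeds, in the sense of Lemma~\ref{lem4}. This squarefreeness of $\mathcal{K}'$ after repeated lifting is the main technical obstacle and exactly parallels the heuristic condition discussed around~\eqref{eq: 4.21}; once it is secured, every congruence solution in either equation is a genuine hit, and both $E$-equations produce infinitely many hits in $F_q$ as claimed.
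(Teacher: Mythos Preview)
You have misread the clause ``$1<d_1=d_2$ between equations.'' It does \emph{not} mean that $d_1=R(a_0)$ and $d_2=R(b_0)$ are both fixed from the base hit; it means that the \emph{same} squarefree value $s$ plays the role of $d_1$ in $E(d_3,d_1)=b$ and the role of $d_2$ in $E(d_3,d_2)=a$. The paper's proof therefore uses a \emph{single} modulus $y$ (taken from $b_0$ in the base hit) and a single congruence $f(d_3)-f(s)\equiv 0\pmod{y}$; the infinitely many exponent solutions split naturally into those with $f(s)<c/2$ (giving $a=f(s)$, i.e.\ $E(d_3,d_1)=b$) and those with $f(s)>c/2$ (giving $b=f(s)$, i.e.\ $E(d_3,d_2)=a$). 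In both regimes the ``free'' term is automatically divisible by $y$, so the hit condition $d_1d_2<Q(c)$ is handled uniformly. The paper's own illustration---$E(2,5)=b$ and $E(2,5)=a$ from $(5,3^3,2^5)$---makes this explicit: in the second equation $d_2=5$, not $d_2=3$.

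Your setup instead fixes $d_2=R(b_0)$ in the second equation and introduces a \emph{new} modulus $y'\mid a_0$. This creates a genuine gap: you require $y'$ to be at least a square dividing $a_0$, but $a_0$ need not have any squareful part. In the paper's worked example $a_0=5$ is squarefree, so no admissible $y'$ exists and your construction cannot start. Even when $a_0$ is powerful, you are left with the ``delicate step'' you yourself flag---controlling $d_1'=R(a')$---which the paper's approach avoids entirely by keeping the same $y$ and letting the power vectors select which side of $c/2$ the fixed term $f(s)$ lands on.
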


\begin{proof}
	If there exist solutions $U_{y}(d_3; d_1)$ to $E(d_3, d_1) = b$ where $y$ is as defined from a given base hit, then one naturally finds solutions $U_{y}(d_3; d_2)$ to $E(d_3, d_2)=a$ by suitable choices of the power vectors for $s \in \mathbb{S}_{>1}$, where $d_1=s$ in $E(d_3, d_1)=b$ and $d_2=s$ in $E(d_3, d_2)=a$ such that $1<d_1=d_2$ between equations. Both solution sets constitute hits for $c \in F_q$, where $c = f(d_3)$ and $d_1 d_2 < Q(c)$.
\end{proof}

Lemma \ref{lem6} implies the infinitudes of hits that can be generated by, say,  $E(2,5)=b$ and $E(2,5)=a$ from the base hit $(5, 3^3, 2^5)$, such as
\[
	(5^3, 3^4 \cdot 13 \cdot 31, 2^{15}) \quad \textnormal{and} \quad (3^3 \cdot 37 \cdot 53, 5^7, 2^{17}).
\]
Hence, Lemmas \ref{lem5} and \ref{lem6} demonstrate the additive commutativity of $abc$ equations, provided it is clear that $a=f(d_1) < b=f(d_2)$. Both lemmas also have important consequences. For instance, for $D<Q(c)$ let $D=s_1s_2$, where $s_1 \in \mathbb{S}$ and $s_2 \in \mathbb{S}_{>1}$. If $s_1=1$ and $\Phi(c_1, s_1, s_2) \in F_{sq}$, then it is trivial that $\Phi(c_1, s_2, s_1) = \emptyset$. If $1<f(s_1)<f(s_2)$, then $\Phi(c_1, s_1, s_2) \cup \Phi(c_1, s_2, s_1) \in F_{sq}$. Similarly, if $s_1=1$ and $\Psi(c_1, s_1, d_3) \in F_{q}$, then $\Phi(c_1, s_2, d_3) = \emptyset$, where $\Phi(c_1, s_2, d_3)$ denotes the sequence of $c$ for which $c - f(s_2) = a$. Hence, any inferences on the same theme as \eqref{eq: 4.7} similarly apply to $\Phi(c_1, s_2, d_3)$ for $a>1$. If $1<f(s_1)<f(s_2)$, then one also has $\Psi(c_1, s_1, d_3) \cup \Psi(c_1, s_2, d_3) \in F_{q}$. We can generalize these patterns by

\begin{lemma} \label{lem7}
 	For $D<Q(c)$ depending on $c \in \mathbb{N}_{>2}$, let $D=s_1 s_2$ where $f(s_1) < f(s_2)$ for $s_1 \in \mathbb{S}$, $s_2 \in \mathbb{S}_{>1}$. Given any base hit $(a_1, b_1, c_1)$, we have that
\[
	\Phi(c_1, D) = \bigcup_{\substack{(s_1, s_2) \\ s_1s_2=D}} \Phi(c_1, s_1, s_2), \qquad \Psi(c_1, D) = \bigcup_{\substack{(s_1, s_2) \\ s_1s_2=D}} \Psi(c_1, s_i, d_3).
\]
such that
\[
	\Phi(c_1, \mathcal{D}(c)) = \bigcup_{D \in \mathcal{D}(c)} \Phi(c_1, D), \qquad \Psi(c_1, \mathcal{D}(c)) = \bigcup_{D \in \mathcal{D}(c)} \Psi(c_1, D).
\]
\end{lemma}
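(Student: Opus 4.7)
The plan is to verify each of the four asserted set equalities by a standard containment-in-both-directions argument, leveraging Lemmas~\ref{lem5} and~\ref{lem6} (which supply the infinite hit sequences for each ordered factorization) together with the observation that every hit carries a canonical product kernel. The proof will essentially be a bookkeeping exercise: the union on the right partitions the set on the left according to which ordered factorization of $D$ is realized by $(R(a), R(b))$.

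First I would fix a base hit $(a_1, b_1, c_1)$ with product kernel $D \in \mathcal{D}(c_1)$ and note that any hit $(a, b, c)$ with $R(ab) = D$ determines a unique ordered factorization $D = s_1 s_2$ via $s_1 = R(a)$ and $s_2 = R(b)$, with the ordering $f(s_1) < f(s_2)$ forced by $a < b$. This immediately gives $\Phi(c_1, D) \subseteq \bigcup_{s_1 s_2 = D} \Phi(c_1, s_1, s_2)$. Conversely, Lemma~\ref{lem5} guarantees that each $\Phi(c_1, s_1, s_2)$ is a well-defined infinite sequence of hits all having product kernel exactly $s_1 s_2 = D$, so every such sequence is contained in $\Phi(c_1, D)$. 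An analogous argument, using Lemma~\ref{lem6} in place of Lemma~\ref{lem5} and interpreting the index $s_i \in \{s_1, s_2\}$ as specifying which factor is held fixed in the equation $E(d_3, s_i)$, handles $\Psi(c_1, D)$; here the role of the ``fixed $d_1$'' is played in turn by each of $s_1$ and $s_2$, so the two orderings of the factorization contribute two sequences that together exhaust $\Psi(c_1, D)$.

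With the decompositions over factorizations established, the outer equalities for $\Phi(c_1, \mathcal{D}(c))$ and $\Psi(c_1, \mathcal{D}(c))$ follow immediately by unioning over all admissible $D \in \mathcal{D}(c)$, since \eqref{eq: 1.26} identifies $\mathcal{D}(c)$ as precisely the set of product kernels that can appear in a hit $a+b=c$.

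The main obstacle I foresee is the bookkeeping around degenerate factorizations. When $D$ is prime or a prime power, or when the base hit has $a_1 = 1$ (so $s_1 = 1$ is forced), only certain ordered factorizations yield nonempty sequences; the discussion following Lemma~\ref{lem6} already flags that $\Phi(c_1, s_2, s_1) = \emptyset$ in such cases. Enforcing $f(s_1) < f(s_2)$ uniformly under the index of the union is therefore essential to avoid vacuous or duplicated terms. A related subtlety is that each individual $\Phi(c_1, s_1, s_2)$ and $\Psi(c_1, s_i, d_3)$ is strictly increasing in $c$ by \eqref{eq: 4.13}--\eqref{eq: 4.14}, whereas their union over factorizations need not preserve this order; hence the claim must be read as an equality of sets, not of monotone sequences, and this caveat should be stated explicitly to keep the notation consistent with the preceding developments.
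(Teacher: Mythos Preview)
Your proposal is correct and aligned in spirit with the paper's own treatment, but it is considerably more elaborate than what the paper actually does. The paper's proof is a single sentence: it observes that the lemma ``naturally follows from the above narrative, where the specified unions account for the multiplicity of $abc$ equations, denoted by $M(c)$ above.'' In other words, the paper treats the displayed equalities as essentially definitional---the objects $\Phi(c_1, D)$, $\Psi(c_1, D)$, $\Phi(c_1, \mathcal{D}(c))$, $\Psi(c_1, \mathcal{D}(c))$ have not been introduced prior to this lemma, so the unions on the right are serving to \emph{define} them, and the only content is that the enumeration matches the count $M(c)$ from \eqref{eq: 4.10}--\eqref{eq: 4.12}.

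Your double-containment argument, the explicit appeal to Lemmas~\ref{lem5} and~\ref{lem6}, and the careful handling of degenerate factorizations (the $s_1=1$ case, the set-versus-sequence caveat) are all sound and would make the exposition more rigorous, but none of this appears in the paper's proof. What you gain is an honest verification that the right-hand unions really do capture every hit with the prescribed kernel data; what the paper gains is brevity by folding the lemma into the running narrative about $M(c)$.
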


\begin{proof}
	This lemma naturally follows from the above narrative, where the specified unions account for the multiplicity of $abc$ equations, denoted by $M(c)$ above, for some $D \in \mathcal{D}(c)$ depending on $c \in \mathbb{N}_{>2}$ in $\Phi$ and $\Psi$.
\end{proof}

It is clear from above that $D$ has a fixed range in $\Phi(c_1, \mathcal{D}(c))$, but $D$ is unbounded in $\Psi(c_1, \mathcal{D}(c))$. For distinction, let $\mathcal{D}_{\Phi}(c)$ and $\mathcal{D}_{\Psi}(c)$ be the admissible sets of $D$ in $\Phi$ and $\Psi$, respectively. Denote by $\Gamma(c_1)$ the set of all exceptional $c$ arising from any known base hit with $c_1$, such that

\begin{lemma} \label{lem8} 
\[
	\Gamma (c_1) = \Phi(c_1, \mathcal{D}_{\Phi}(c))  \cup \Psi(c_1, \mathcal{D}_{\Psi}(c)).
\]
\end{lemma}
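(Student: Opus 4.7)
The plan is to verify the claimed set equality by a bookkeeping argument that combines the algorithmic exhaustiveness developed throughout Section 3 with the enumeration already established in Lemma \ref{lem7}. By the definition of $\Gamma(c_1)$, I first need to argue that every exceptional $c$ accessible from the base hit $c_1$ is produced by exactly one of the two solution-generation mechanisms developed earlier: the $F$-recursion that fixes $(d_1, d_2)$ and solves $F(d_1, d_2) \equiv 0 \pmod{z}$ for successive $c$, producing the sequence $\Phi(c_1, d_1, d_2)$ of \eqref{eq: 4.13}; and the $E$-recursion that fixes $(d_1, d_3)$ and solves $E(d_3, d_1) \equiv 0 \pmod{y}$ for successive $b$, producing the sequence $\Psi(c_1, d_1, d_3)$ of \eqref{eq: 4.14}.

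For the forward inclusion, I would take an arbitrary $c \in \Gamma(c_1)$, extract the associated product kernel $D \in \mathcal{D}(c)$, and classify the underlying recursion as $F$-type or $E$-type; this places $D$ in $\mathcal{D}_{\Phi}(c)$ or $\mathcal{D}_{\Psi}(c)$ correspondingly, and hence $c \in \Phi(c_1, D)$ or $c \in \Psi(c_1, D)$ for that particular admissible $D$. Taking unions over $D$ and appealing directly to the two decomposition identities in Lemma \ref{lem7} yields
\[
	\Gamma(c_1) \subseteq \Phi(c_1, \mathcal{D}_{\Phi}(c)) \cup \Psi(c_1, \mathcal{D}_{\Psi}(c)).
\]
The reverse inclusion is then immediate: every element of $\Phi(c_1, \mathcal{D}_{\Phi}(c))$ is, by construction \eqref{eq: 4.13}, a hit produced from $c_1$ via the $F$-recursion for some admissible $D$, and every element of $\Psi(c_1, \mathcal{D}_{\Psi}(c))$ is, by construction \eqref{eq: 4.14}, a hit produced from $c_1$ via the $E$-recursion; in either case the resulting $c$ qualifies as an exceptional $c$ arising from the base hit $c_1$ and thus lies in $\Gamma(c_1)$.

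The main obstacle I expect is justifying exhaustiveness, namely that no hit derived from $c_1$ slips outside both unions. Two symmetries must be invoked to rule this out. First, Lemma \ref{lem5} ensures that whenever $D = s_1 s_2$ with $f(s_1) < f(s_2)$, the commuted orderings $(s_1, s_2)$ and $(s_2, s_1)$ both contribute, so that the $\Phi$-union is not undercounted. Second, Lemma \ref{lem6} absorbs the apparently separate $E(d_3, d_2) = a$ variant into the $\Psi$-family by the $d_1 \leftrightarrow d_2$ relabeling, so that exactly one notational form suffices. Once these two symmetries are installed, the algorithm described in Section 3 has no further branches, and Lemma \ref{lem8} reduces to a restatement of its domain of applicability.
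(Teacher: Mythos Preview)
Your proposal is correct and follows essentially the same approach as the paper: both treat the identity as a direct consequence of Lemma~\ref{lem7} together with the definitions of $\Gamma(c_1)$, $\mathcal{D}_{\Phi}(c)$, and $\mathcal{D}_{\Psi}(c)$. The paper's proof is a single sentence (``immediately follows from Lemma~\ref{lem7}, with a change in notation''), whereas you unpack this into explicit forward and reverse inclusions and invoke Lemmas~\ref{lem5} and~\ref{lem6} to close off the commutativity branches; this extra care is reasonable but not a different argument.
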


\begin{proof}
	The equation immediately follows from Lemma \ref{lem7}, with a change in notation as defined above.
\end{proof}

Lemma \ref{lem8} accounts for any $c_1$ where $H(c_1) \geq 1$. The $abc$ conjecture thus follows naturally if \eqref{eq: 4.5} and \eqref{eq: 4.7} hold for all $c \in \Gamma (c_1)$ regardless of $c_1 \in \mathbb{N}_{>2}$. Hopefully, this provides useful insights to motivate simpler geometric constructions that could lead to a more rigorous proof of the conjecture.

\bibliographystyle{amsplain}

\end{document}